\newtheorem{Example}{Example}  
\newtheorem{Theorem}{Theorem}
\newtheorem{Lemma}{Lemma}
\DeclareMathAlphabet\mathbfcal{OMS}{cmsy}{b}{n}
\newcommand{\be}{\begin{equation}}
\newcommand{\ee}{\end{equation}}
\newcommand{\bea}{\begin{eqnarray}}
\newcommand{\eea}{\end{eqnarray}}
\newcommand{\beas}{\begin{eqnarray*}}
	\newcommand{\eeas}{\end{eqnarray*}}
\newcommand{\bbR}{\mathbb{R}}
\newcommand{\cT}{\mathcal{T}}
\newcommand{\cA}{\mathcal{A}}
\newcommand{\0}{{\mathbf{0}}}
\renewcommand{\u}{{\mathbf{u}}}
\renewcommand{\v}{{\mathbf{v}}}
\newcommand{\x}{{\mathbf{x}}}
\newcommand{\y}{{\mathbf{y}}}
\newcommand{\z}{{\mathbf{z}}}
\newcommand{\X}{{\mathbf{X}}}
\newcommand{\B}{{\mathbf{B}}}
\newcommand{\I}{{\mathbf{I}}}
\newcommand{\M}{{\mathbf{M}}}
\newcommand{\U}{{\mathbf{U}}}
\newcommand{\V}{{\mathbf{V}}}
\newcommand{\A}{{\mathbf{A}}}
\newcommand{\Z}{{\mathbf{Z}}}
\newcommand{\T}{{\mathbf{T}}}
\newcommand{\Proj}{P}
\newcommand{\bSigma}{\boldsymbol{\Sigma}}
\newcommand{\rank}{{\rm rank}}
\newcommand{\rmdiag}{{\rm diag}}
\newcommand{\bbO}{\mathbb{O}}
\newcommand*{\rom}[1]{\expandafter\@slowromancap\romannumeral #1@}
\begin{document}

\title{A Schatten-$q$ Low-rank Matrix Perturbation Analysis via Perturbation Projection Error Bound}

\author{Yuetian Luo$^1$, Rungang Han$^1$, and Anru R. Zhang$^{1, 2}$}

\date{}

\footnotetext[1]{Department of Statistics, University of Wisconsin-Madison. (\texttt{yluo86@wisc.edu}, \texttt{rhan32@stat.wisc.edu})}

\footnotetext[2]{Department of Biostatistics \& Bioinformatics, Duke University. (\texttt{anru.zhang@duke.edu})}

\maketitle

\bigskip

\begin{abstract}
This paper studies the Schatten-$q$ error of low-rank matrix estimation by singular value decomposition under perturbation. We specifically establish a perturbation bound on the low-rank matrix estimation via a perturbation projection error bound. Then, we establish lower bounds to justify the tightness of the upper bound on the low-rank matrix estimation error. We further develop a user-friendly sin$\Theta$ bound for singular subspace perturbation based on the matrix perturbation projection error bound. Finally, we demonstrate the advantage of our results over the ones in the literature by simulation.
\end{abstract}

{ \noindent \bf Keywords:} perturbation theory, Schatten-$q$ norm, singular value decomposition, low-rank matrix estimation,  matrix perturbation projection, sin-theta distance

{\noindent \bf AMS subject classifications:} 15A42, 65F55

\begin{sloppypar}
\section{Introduction}\label{sec:intro}

Let $\A$ be an $m$-by-$n$ real-valued matrix with singular value decomposition (SVD)
\begin{equation*}
	\A=[\U \,\, \U_{\perp} ] \left[ \begin{array}{c c}
			\bSigma_1 & 0 \\
			0 & \bSigma_2 
		\end{array} \right] \left[ \begin{array}{c}
			\V^\top \\
			\V_{\perp}^\top
		\end{array} \right],
\end{equation*}
where $\U \in \bbR^{m\times r}$, $\V \in \bbR^{n\times r}$, $[\U \,\, \U_{\perp}], [\V \,\, \V_{\perp}]$ are orthogonal and $\bSigma_1,\bSigma_2$ are (pseudo) diagonal matrices with decreasing singular values of $\A$. Suppose $\B = \A + \Z \in \bbR^{m \times n}$, where $\Z$ is some perturbation matrix. We similarly write down the SVD of $\B$ as
\begin{equation*}
	\begin{split}
		\B =& [\widehat{\U} \,\, \widehat{\U}_{\perp} ] \left[ \begin{array}{c c}
			\widehat{\bSigma}_1 & 0 \\
			0 & \widehat{\bSigma}_2 
		\end{array} \right] \left[ \begin{array}{c}
			\widehat{\V}^\top \\
			\widehat{\V}_{\perp}^\top
		\end{array} \right]
	\end{split}
\end{equation*} 
such that $\widehat{\U}$ and $\widehat{\V}$ share the same dimensions as $\U$ and $\V$, respectively. The relationship between the singular structures of $\A$ and $\B$ is a central topic in matrix perturbation theory. Since the seminal work by Weyl \cite{weyl1912asymptotische}, Davis-Kahan \cite{davis1970rotation}, Wedin \cite{wedin1972perturbation}, the perturbation analysis for singular values (i.e., $\bSigma_1,\bSigma_2$ versus $\widehat{\bSigma}_1, \widehat{\bSigma}_2$) and the leading singular vectors (i.e., $\U,\V$ versus $\widehat\U,\widehat\V$) have attracted enormous attentions. For example, \cite{vaccaro1994second,xu2002perturbation,liu2008first} studied perturbation expansion for singular value decomposition; \cite{li1998relative,li1998relative2,londre2000note,stewart2006perturbation} established the relative perturbation theory for eigenvectors of Hermitian matrices and singular vectors of general matrices;  \cite{demmel1990accurate,barlow1990computing,demmel1992jacobi,drmavc2008new} studied the numeric computation accuracy for singular values and vectors; more recently, \cite{yu2014useful,cai2018rate,cape2019two} developed several new perturbation results under specific structural assumptions motivated by emerging applications in statistics and data science. The readers are referred to \cite{stewart1990matrix,ipsen2000overview,bhatia2013matrix} for overviews of the historical development of matrix perturbation theory.

While most of the existing works focused on $\U$ and $\V$ or $\bSigma_1$ and $\bSigma_2$, there are fewer studies on the perturbation analysis of the true matrix $\A$ itself. In this paper, we consider the estimation of rank-$r$ matrix $\A$ (i.e., $\bSigma_2 = 0$) via rank-$r$ truncated SVD (i.e., best rank-$r$ approximation) of $\B$: $\widehat \A := \widehat\U \widehat\bSigma_1\widehat\V^\top$. Such a low-rank assumption and estimation method are widely used in many applications including matrix denoising \cite{gavish2014optimal,donoho2014minimax}, signal processing \cite{tufts1993estimation,jolliffe2002principal} and multivariate statistical analysis \cite{morrison1976multivariate}, etc. We focus on the estimation error in matrix Schatten-$q$ norm: $\|\widehat{\A} - \A\|_q$. A tight upper bound on $\|\widehat{\A} - \A\|_q$ can provide an important benchmark for both algorithmic and statistical analysis in the applications mentioned above; moreover, it can be used to study some other basic perturbation quantities, such as the pseudo-inverse perturbation $\|\widehat{\A}^\dagger - \A^\dagger\|_q$ \cite{wedin1973perturbation,stewart1977perturbation}. 

As a starting point, it is straightforward to apply the classical perturbation bounds for singular values and vectors to obtain an upper bound on $\|\widehat\A-\A\|_q$. For example, one can immediately have the following inequality via Wedin's $\sin\Theta$ Theorem (Eq. (4.4) in \cite{wedin1972perturbation}),
\begin{equation}\label{ineq: wedin matrix reconstruction}
	\| \widehat{\A} - \A \|_q \leq \|\Z\|_q \left(3 + \| \B - \widehat\A\|_q/\sigma_r(\B) \right).
\end{equation}
Another way is utilizing the optimality of SVD (Eckart-Young-Mirsky Theorem) and some basic norm inequalities to obtain:
\begin{equation}\label{ineq: matrix reconstruction via q-norm}
	\| \widehat{\A} - \A \|_q \leq \| \widehat{\A} - \B \|_q + \|\A - \B\|_q \leq 2 \| \A - \B  \|_q = 2 \| \Z  \|_q,
\end{equation} 
\begin{equation} \label{ineq: matrix reconstruction via spectral norm}
	\| \widehat{\A} - \A \|_q \leq (2r)^{1/q} \| \widehat{\A} - \A \| \overset{\eqref{ineq: matrix reconstruction via q-norm}}\leq 2^{1+1/q} r^{1/q} \|\Z\|.
\end{equation}
In contrast, we establish the following result in this paper:
\begin{Theorem}\label{th: low rank matrix reconstruction}
	Suppose $\B = \A + \Z$, where $\A$ is an unknown rank-$r$ matrix, $\B$ is the observation, and $\Z$ is the perturbation. Let $\widehat{\A} = \widehat\U \widehat\bSigma_1\widehat\V^\top$ be the best rank-$r$ approximation of $\B$. Then, 
	\begin{equation}\label{ineq:hat-A-A}
	\begin{split}
		\|\widehat{\A} - \A\|_q & \leq \left\{ \begin{array}{ll}
				(2^q + 1)^{1/q} \left\|\Z_{\max(r)}\right\|_q, & 1\leq q \leq 2;\\
				\sqrt{5} \left\|\Z_{\max(r)}\right\|_q, & 2\leq q < \infty;\\
				2\|\Z_{\max(r)}\|, & q=\infty.
			\end{array} \right.
	\end{split}
	\end{equation}
	Here $\Z_{\max(r)}$ is defined as the best rank-$r$ approximation of $\Z$.  
\end{Theorem}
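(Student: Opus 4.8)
The plan is to replace the triangle-inequality estimates \eqref{ineq: matrix reconstruction via q-norm}--\eqref{ineq: matrix reconstruction via spectral norm} by a single \emph{exact} decomposition of $\widehat\A-\A$ into two pieces with orthogonal column spaces, to bound each piece by a multiple of $\|\Z_{\max(r)}\|_q$, and to recombine them with a block-matrix Schatten inequality that produces precisely the constants $(2^q+1)^{1/q}$ and $\sqrt5$.

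First I would record the elementary identity $\widehat\A=\widehat\U\widehat\U^\top\B$ (truncating the SVD of $\B$ to its top $r$ components is the same as projecting $\B$ onto $\widehat\U$ from the left, since $\widehat\U^\top\widehat\U_\perp=0$). Writing $P:=\widehat\U\widehat\U^\top$ and using $\B=\A+\Z$ gives the key formula
\[ \widehat\A-\A=P\B-\A=P\Z-(I-P)\A . \]
The two summands have mutually orthogonal column spaces, one lying in $\mathrm{col}(\widehat\U)$ and the other in its complement, so that $(P\Z)^\top(I-P)\A=0$.

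Next I would bound the two pieces, each by a multiple of $\|\Z_{\max(r)}\|_q$. The first is immediate: $P\Z$ has rank at most $r$ and, $P$ being an orthogonal projection, $\sigma_i(P\Z)\le\sigma_i(\Z)$, whence $\|P\Z\|_q\le\|\Z_{\max(r)}\|_q$. The second piece is the heart of the matter (the ``perturbation projection error bound'' of the title), and I expect it to be the main obstacle, precisely because it must hold with \emph{no eigengap assumption}, so a $\sin\Theta$-type estimate is unavailable. Here I would use that $(I-P)\A$ has rank at most $r$ and that $(I-P)\B=\B-\widehat\A$ is the SVD residual, giving $(I-P)\A=(\B-\widehat\A)-(I-P)\Z$; right-multiplying by the rank-$\le r$ projector $\Pi$ onto the row space of $(I-P)\A$ leaves it unchanged and yields $(I-P)\A=(\B-\widehat\A)\Pi-(I-P)\Z\,\Pi$. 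Both terms are now of rank at most $r$, and Weyl's inequality together with $\mathrm{rank}(\A)=r$ gives $\sigma_i(\B-\widehat\A)=\sigma_{r+i}(\B)\le\sigma_i(\Z)$; since compressions do not increase singular values, each term has Schatten-$q$ norm at most $\|\Z_{\max(r)}\|_q$, so $\|(I-P)\A\|_q\le2\|\Z_{\max(r)}\|_q$.

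Finally I would recombine. Because the two pieces have orthogonal column spaces, $\|\widehat\A-\A\|_q$ equals the Schatten-$q$ norm of the stacked operator, so $\|\widehat\A-\A\|_q^2=\big\|(P\Z)^\top P\Z+((I-P)\A)^\top(I-P)\A\big\|_{q/2}$. For $2\le q<\infty$ the triangle inequality in the Schatten-$(q/2)$ norm gives $\|\widehat\A-\A\|_q\le(\|P\Z\|_q^2+\|(I-P)\A\|_q^2)^{1/2}\le(1+4)^{1/2}\|\Z_{\max(r)}\|_q=\sqrt5\,\|\Z_{\max(r)}\|_q$; for $1\le q\le2$ the subadditivity $\tr(S+T)^{q/2}\le\tr S^{q/2}+\tr T^{q/2}$ for positive semidefinite $S,T$ (valid since $q/2\le1$) gives $\|\widehat\A-\A\|_q\le(\|P\Z\|_q^q+\|(I-P)\A\|_q^q)^{1/q}\le(1+2^q)^{1/q}\|\Z_{\max(r)}\|_q$. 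The case $q=\infty$ is already covered by \eqref{ineq: matrix reconstruction via q-norm} (since $\|\Z_{\max(r)}\|=\|\Z\|$), which produces the sharper constant $2$ there. Beyond the gap-free projection-error bound, the only step needing care is invoking these two Schatten block inequalities on their correct ranges of $q$.
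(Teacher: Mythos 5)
Your proof is correct, and although its skeleton coincides with the paper's --- you use the same exact decomposition $\widehat\A-\A = P_{\widehat\U}\Z - P_{\widehat\U_\perp}\A$ and the same block recombination yielding the constants $(2^q+1)^{1/q}$ and $\sqrt5$ --- the way you establish the crucial projection error bound $\|P_{\widehat\U_\perp}\A\|_q\le 2\|\Z_{\max(r)}\|_q$ is genuinely different and more elementary than the paper's. The paper (Theorem \ref{th:SVD-projection}) proves this via a dual representation of the truncated Schatten-$q$ norm (Lemma \ref{lm: charac of Schatten-q norm}) combined with a generalized Eckart--Young theorem in that truncated norm (Lemma \ref{lm: optimality of SVD in truncated Schatten-q norm}). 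You instead write $P_{\widehat\U_\perp}\A=(\B-\widehat\A)-P_{\widehat\U_\perp}\Z$, right-multiply by the rank-$\le r$ projector $\Pi$ onto the row space of $P_{\widehat\U_\perp}\A$ so that both summands become rank-$\le r$, and bound each by $\|\Z_{\max(r)}\|_q$ using Weyl's inequality $\sigma_{r+i}(\B)\le\sigma_{r+1}(\A)+\sigma_i(\Z)=\sigma_i(\Z)$ together with the fact that multiplication by contractions does not increase singular values; the ordinary triangle inequality then produces the factor $2$. This requires only classical singular-value inequalities and entirely avoids the duality machinery, which is a real simplification. A second, smaller divergence: for the block inequality in the range $1< q\le 2$ the paper argues by duality (an adjoint-contraction argument in Lemma \ref{lm:partition-l_q-norm}), whereas you invoke the trace subadditivity $\tr(S+T)^{q/2}\le\tr S^{q/2}+\tr T^{q/2}$ for positive semidefinite $S,T$ (Rotfel'd/McCarthy); both are valid, though you should cite that inequality explicitly. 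What the paper's heavier route buys is reusable infrastructure: the truncated-norm duality and the unitary invariance of $\|(\cdot)_{\max(r)}\|_q$ (Lemma \ref{lem: triangle of trun schatten q}) are reused elsewhere, e.g.\ to obtain the refined intermediate bound \eqref{ineq:projection-error} and in the proof of Lemma \ref{lm: spectral of sin theta}, and the paper's intermediate bound is stated purely in terms of $P_{\widehat\U_\perp}\Z$ and $P_{\U_\perp}\Z$, while yours is tied to the auxiliary projector $\Pi$. Your handling of the remaining pieces ($\|P_{\widehat\U}\Z\|_q\le\|\Z_{\max(r)}\|_q$ from rank and contraction considerations, and the $q=\infty$ case via \eqref{ineq: matrix reconstruction via q-norm} with $\|\Z_{\max(r)}\|=\|\Z\|$) matches the paper exactly.
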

The proof of Theorem \ref{th: low rank matrix reconstruction} relies on a careful characterization of $\|P_{\widehat\U_\perp}\A\|_q$ (where $P_{\widehat \U_\perp}$ is the projection onto the subspace spanned by $\widehat{\U}_\perp$) in Theorem \ref{th:SVD-projection}, which we refer as the \emph{perturbation projection error bound}. The details of Theorem 2 and the proof of Theorem \ref{th: low rank matrix reconstruction} will be presented in Section 2. 

The established bound \eqref{ineq:hat-A-A} is sharper than the classic results \eqref{ineq: wedin matrix reconstruction}, \eqref{ineq: matrix reconstruction via q-norm} and \eqref{ineq: matrix reconstruction via spectral norm} since $\|\Z_{\max(r)}\|_q\leq \min\{\|\Z\|_q, r^{1/q}\|\Z\|\}$ for any $\Z$. When $m, n \gg r$ and the first $r$ singular values of $\Z$ decay fast, which commonly happens in many large-scale matrix datasets \cite{udell2019big}, $\|\Z_{\max(r)}\|_q$ can be much smaller than $\|\Z\|_q, r^{1/q}\|\Z\|$ (see an example in Section \ref{sec:main-result}) so that the upper bound of \eqref{ineq:hat-A-A} can be much smaller than \eqref{ineq: wedin matrix reconstruction}, \eqref{ineq: matrix reconstruction via q-norm} and \eqref{ineq: matrix reconstruction via spectral norm}. 

Then, we further introduce two lower bounds to justify the tightness of the upper bound in Theorem \ref{th: low rank matrix reconstruction}. Specifically for any $\epsilon > 0$, $1\leq q \leq \infty$, we construct a triplet of matrices $(\A,\Z,\B)$ such that
\begin{equation}\label{ineq:general-lower-bound}
	\|\widehat{\A} - \A\|_q \geq ((2^q + 1)^{1/q} - \epsilon)\|\Z_{\max(r)}\|_q >0,
\end{equation}
which suggests that the constant in \eqref{ineq:hat-A-A} cannot be further improved for $q\in [1, 2]\cup\{\infty\}$. In addition, we introduce an estimation error lower bound to show that the rank-$r$ truncated SVD estimator (i.e., $\widehat \A$) is minimax rate-optimal over the class of all rank-$r$ matrices. 

As a byproduct of the theory in this paper, we derive a subspace (singular vectors) sin$\Theta$ perturbation bound (definition of Schatten-$q$ sin$\Theta$ distance is in Section \ref{sec: notation}) under the same condition as Theorem \ref{th: low rank matrix reconstruction}:
$$\max\left\{\|\sin \Theta(\widehat{\U}, \U)\|_q, \|\sin \Theta(\widehat{\V}, \V)\|_q\right\} \leq \frac{2\| \Z_{\max(r)} \|_q }{\sigma_r(\A)}.$$
This bound is ``user-friendly" as it is free of $\B$, $\widehat\U$, and $\widehat\V$, which are often perturbed and uncontrolled quantities in practice (see more discussions in Section \ref{sec:wedin}).

The rest of this paper is organized as follows. After a brief introduction on notation and preliminaries in Section \ref{sec: notation}, we present the proof of Theorem \ref{th: low rank matrix reconstruction} in Section  \ref{sec:main-result} and develop the corresponding lower bounds in Section \ref{sec:lower-bound}. The new $\sin\Theta$ perturbation analysis is done in Section \ref{sec:wedin}. We provide numerical studies to corroborate our theoretical findings in Section \ref{sec: numerical study}. Conclusion and discussions are made in Section \ref{sec: conclusion}. 

\subsection{Notation and Preliminaries} \label{sec: notation}

The following notation will be used throughout this paper. The lowercase letters (e.g., $a, b$), lowercase boldface letters (e.g., $\u, \v$), uppercase boldface letters (e.g., $\U, \V$) are used to denote scalars, vectors, matrices, respectively. For any two numbers $a,b$, let $a \wedge b = \min\{a,b\}$, $a \vee b = \max\{a,b\}$. For any matrix $\A \in \mathbb{R}^{m\times n}$ with singular value decomposition $\sum_{i=1}^{m \land n} \sigma_i(\A)\u_i \v_i^\top$, let $\A_{\max(r)}= \sum_{i=1}^{r} \sigma_i(\A)\u_i \v_i^\top$ be the best rank-$r$ approximation of $\A$, and $\A_{-\max(r)} = \sum_{i=r+1}^{m \land n} \sigma_i(\A)\u_i \v_i^\top$ be the remainder. For $q \in [1, \infty]$, the Schatten-$q$ norm of matrix $\A$ is defined as $\|\A\|_q := \left( \sum_{i=1}^{m \land n} \sigma^q_i(\A)  \right)^{1/q}$. Especially, Frobenius norm $\|\cdot\|_F$ and spectral norm $\|\cdot\|$ are Schatten-$2$ norm and Schatten-$\infty$ norm, respectively. In addition, let $\I_r$ be the $r$-by-$r$ identity matrix. Let $\bbO_{r}$ be the set of $r$-by-$r$ orthogonal matrices, $\mathbb{O}_{p, r} = \{\U \in \mathbb R^{p\times r}: \U^\top \U=\I_r\}$ be the set of all $p$-by-$r$ matrices with orthonormal columns. For any $\U\in \mathbb{O}_{p, r}$, $P_{\U} = \U\U^\top$ is the projection matrix onto the column span of $\U$. We also use $\U_\perp\in \mathbb{O}_{p, p-r}$ to represent the orthonormal complement of $\U$. We use bracket subscripts to denote sub-matrices. For example, $\A_{[i_1,i_2]}$ is the entry of $\A$ on the $i_1$-th row and $i_2$-th column; $\A_{[(r+1):m, :]}$ contains the $(r+1)$-th to the $m$-th rows of $\A$.

We use the $\sin \Theta$ norm to quantify the distance between singular subspaces. Suppose $\U_1$ and $\U_2$ are two $p$-by-$r$ matrices with orthonormal columns. Let the singular values of $\U_1^\top \U_2$ be $\sigma_1 \geq \sigma_2 \geq \ldots \geq \sigma_r \geq 0$. Then $\Theta(\U_1, \U_2)$ is defined as a diagonal matrix with principal angles between $\U_1$ and $\U_2$:
$$\Theta(\U_1, \U_2) = \rmdiag\left( \cos^{-1} (\sigma_1), \ldots, \cos^{-1} (\sigma_r) \right).$$
Then the Schatten-$q$ $\sin\Theta$ distance is defined as
\begin{equation}\label{label:Schatten-q-distance}
\|\sin\Theta(\U_1, \U_2)\|_q = \left\|\rmdiag(\sin \cos^{-1}(\sigma_1), \ldots, \sin \cos^{-1}(\sigma_r) )\right\|_q = \left(\sum_{i=1}^r (1-\sigma_r^2)^{q/2}\right)^{1/q}.
\end{equation}
Importantly, $\| \U_{1 \perp}^\top \U_2\|_q = \left\| \sin \Theta(\U_1, \U_2) \right\|_q$ for any $q \in [1,\infty]$ \cite[Lemma 2.1]{li1998relative2}. 

Finally, a function $\Phi: \bbR^n \to \bbR$ is called a symmetric gauge function if (1) $\x \neq 0 \Longrightarrow \Phi(\x) > 0 $, (2) $\Phi(\rho \x) = |\rho| \Phi(\x)$ for $\rho \in \bbR$, (3) $\Phi(\x + \y) \leq \Phi(\x) + \Phi(\y)$ for any $\x, \y \in \bbR^n$, and (4) for any permutation matrix $\mathbf{P}$, we have $\Phi(\mathbf{P}\x) = \Phi(\x)$ \cite[Definition II.3.3]{stewart1990matrix}.
\section{Proof of Theorem \ref{th: low rank matrix reconstruction}}\label{sec:main-result}
The roadmap of the proof of Theorem \ref{th: low rank matrix reconstruction} is the following. We first introduce Theorem \ref{th:SVD-projection}, which quantifies the projection error, $\|P_{\widehat{\U}_\perp}\A\|_q$ and $\|\A P_{\widehat{\V}_\perp}\|_q$, under the perturbation model. This result plays a crucial role in the proof of Theorem \ref{th: low rank matrix reconstruction} and may also be of independent interest. Next, we present Lemma \ref{lm:partition-l_q-norm} with proof and then give the proof for Theorem \ref{th: low rank matrix reconstruction}. Since the proof of Theorem \ref{th:SVD-projection} is relatively long, we present its full proof and discussions in Section \ref{sec: proof-of-th2}.  
\begin{Theorem}[A perturbation projection error bound]\label{th:SVD-projection}
	Suppose $\B = \A + \Z$ for some rank-$r$ matrix $\A$ and perturbation matrix $\Z$. Then for any $q \in [1, \infty]$,
 	\begin{equation}\label{ineq:P_U-P_V}
 	\max\left\{\|P_{\widehat{\U}_\perp}\A\|_q,\|\A P_{\widehat{\V}_\perp}\|_q \right\} \leq 2 \|\Z_{\max(r)}\|_q.
 	\end{equation}
\end{Theorem}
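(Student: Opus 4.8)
By transposing the roles of rows and columns it suffices to establish the bound for $\|P_{\widehat\U_\perp}\A\|_q$: the bound for $\|\A P_{\widehat\V_\perp}\|_q$ then follows by applying the identical argument to $\A^\top = \B^\top - \Z^\top$, using that the best rank-$r$ left singular space of $\B^\top$ is spanned by $\widehat\V$ and that $\|\Z^\top_{\max(r)}\|_q = \|\Z_{\max(r)}\|_q$. The whole plan rests on one structural observation: since $\A$ has rank $r$, the matrix $M := P_{\widehat\U_\perp}\A$ also has rank at most $r$. This low-rank feature is exactly what should let me replace the full norm $\|\Z\|_q$, which a naive triangle-inequality split produces, by the truncated norm $\|\Z_{\max(r)}\|_q$.

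First I would record the elementary identity $P_{\widehat\U_\perp}\B = \widehat\U_\perp\widehat\bSigma_2\widehat\V_\perp^\top$, coming from the SVD of $\B$, so that the singular values of $P_{\widehat\U_\perp}\B$ are precisely the tail singular values $\widehat\sigma_{r+1}, \widehat\sigma_{r+2}, \ldots$ of $\B$. Combining Weyl's inequality for singular values with $\sigma_{r+1}(\A) = 0$ then yields $\widehat\sigma_{r+i} = \sigma_{r+i}(\B) \leq \sigma_i(\Z)$ for every $i \geq 1$, the key quantitative link between the tail of $\B$ and the head of $\Z$.

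The crucial step is to introduce the orthogonal projector $P_M$ onto the column space of $M$, whose rank is at most $r$. Since this column space lies in $\mathrm{span}(\widehat\U_\perp)$ and is thus orthogonal to $\mathrm{span}(\widehat\U)$, one has $P_M P_{\widehat\U} = 0$, giving the identity $M = P_M M = P_M P_{\widehat\U_\perp}\A = P_M\A$. Writing $\A = \B - \Z$ and applying the triangle inequality gives $\|M\|_q \leq \|P_M\B\|_q + \|P_M\Z\|_q$. Both terms on the right are a rank-$\leq r$ projector times a matrix, so each has at most $r$ nonzero singular values, which satisfy $\sigma_i(P_M\Z) \leq \sigma_i(\Z)$ and, after reducing $P_M\B = P_M P_{\widehat\U_\perp}\B$ via $P_M P_{\widehat\U} = 0$, also $\sigma_i(P_M\B) \leq \sigma_i(P_{\widehat\U_\perp}\B) = \widehat\sigma_{r+i}$. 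Summing only the first $r$ singular values and invoking the Weyl bound above gives $\|P_M\Z\|_q \leq \|\Z_{\max(r)}\|_q$ and $\|P_M\B\|_q \leq \|\Z_{\max(r)}\|_q$, whence $\|M\|_q \leq 2\|\Z_{\max(r)}\|_q$.

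I expect the main obstacle to be conceptual rather than computational: realizing that one should pass from $P_{\widehat\U_\perp}$ to the projector $P_M$ onto the column space of $M$, since it is precisely the rank-$\leq r$ projector $P_M$ that forces both the $\B$ part and the $\Z$ part to contribute only their leading $r$ singular values, collapsing $\|\Z\|_q$ down to $\|\Z_{\max(r)}\|_q$. Once the identity $M = P_M\A$ is in hand, the remainder is routine: the bound $\sigma_i(P_M\Z) \leq \sigma_i(\Z)$ is the submultiplicative inequality $\sigma_i(P\Z) \leq \|P\|\,\sigma_i(\Z)$ with $\|P_M\| = 1$, and $\sigma_i(P_M\B) \leq \widehat\sigma_{r+i}$ is cleanest to justify through the explicit form $P_{\widehat\U_\perp}\B = \widehat\U_\perp\widehat\bSigma_2\widehat\V_\perp^\top$.
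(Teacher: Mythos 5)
Your proof is correct, and it takes a genuinely different route from the paper's. The paper argues by duality: since $\rank(P_{\widehat{\U}_\perp}\A)\leq r$, it represents $\|P_{\widehat{\U}_\perp}\A\|_q$ as a supremum of $\langle P_{\widehat{\U}_\perp}\A, \X\rangle$ over rank-$r$ test matrices $\X$ in the dual-norm ball (Lemma \ref{lm: charac of Schatten-q norm}), splits $\A = \B - \Z$ inside the pairing, and handles the $\B$-term via the optimality of truncated SVD in the truncated Schatten-$q$ norm (Lemma \ref{lm: optimality of SVD in truncated Schatten-q norm}) with competitor $\M = P_{\U}\B$, so that $\B - \M = P_{\U_\perp}\Z$; this produces the intermediate bound \eqref{ineq:projection-error} and then the constant $2$. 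You instead project onto the column space of $P_{\widehat{\U}_\perp}\A$ itself: writing $P_0$ for that rank-$\leq r$ orthogonal projector, the identity $P_{\widehat{\U}_\perp}\A = P_0\A = P_0\B - P_0\Z$ forces both terms to have rank at most $r$, so only the top $r$ singular values of each can contribute, and these are controlled by $\sigma_i(P_0\Z)\leq \sigma_i(\Z)$ and $\sigma_i(P_0\B)\leq \sigma_{r+i}(\B)\leq \sigma_i(\Z)$, the last step by Weyl's inequality using $\sigma_{r+1}(\A)=0$. Your route is more elementary and self-contained: it needs only Weyl's inequality and the product bound $\sigma_i(\A\B)\leq \|\A\|\,\sigma_i(\B)$ (cf.\ Lemma \ref{lm: singular value characterization}), and it avoids the dual representation entirely, whose proof in the paper requires Ky Fan norms and Karamata's inequality. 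What the paper's machinery buys in exchange is reusability --- Lemma \ref{lm: charac of Schatten-q norm} is invoked again in the proofs of Lemmas \ref{lem: triangle of trun schatten q} and \ref{lm: spectral of sin theta} --- and an explicitly stated refinement \eqref{ineq:projection-error} in terms of $\|(P_{\U_\perp}\Z)_{\max(r)}\|_q + \|(P_{\widehat{\U}_\perp}\Z)_{\max(r)}\|_q$, which the paper remarks can be strictly sharper than $2\|\Z_{\max(r)}\|_q$. Your argument in fact yields a comparable refinement --- your two terms are exactly $\|(P_{\widehat{\U}_\perp}\B)_{\max(r)}\|_q$ (which is at most $\min_{\rank(\M)\leq r}\|(\B-\M)_{\max(r)}\|_q$, hence at least as sharp as the paper's $\B$-term) and $\|P_0\Z\|_q \leq \|(P_{\widehat{\U}_\perp}\Z)_{\max(r)}\|_q$ --- though you did not state it.
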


Next, the following Lemma \ref{lm:partition-l_q-norm} characterizes the Schatten-$q$ norm of matrix orthogonal projections. 
\begin{Lemma}\label{lm:partition-l_q-norm}
Suppose $\A,\B\in \mathbb{R}^{m\times n}$, $\U\in \mathbb{O}_{m, r}$, $q\geq 1$. Then,
$$\|P_\U\A + P_{\U_\perp} \B\|_q \leq \left\{
\begin{array}{ll}
\left(\|P_\U\A\|_q^2 + \|P_{\U_\perp}\B\|_q^2\right)^{1/2}, & 2\leq q\leq \infty;\\
\left(\|P_\U\A\|_q^q + \|P_{\U_\perp}\B\|_q^q\right)^{1/q}, & 1\leq q \leq 2.
\end{array}
\right.$$
\end{Lemma}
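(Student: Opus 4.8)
The plan is to reduce the statement to a block-matrix norm inequality and then handle the two ranges of $q$ with different tools. First I would note that $P_\U\A = \U(\U^\top\A)$ and $P_{\U_\perp}\B = \U_\perp(\U_\perp^\top\B)$ have columns lying in the mutually orthogonal subspaces spanned by $\U$ and $\U_\perp$. Setting $C = \U^\top\A$, $D = \U_\perp^\top\B$, and $\M = \begin{bmatrix} C \\ D \end{bmatrix}$, we have
$$P_\U\A + P_{\U_\perp}\B = [\U\,\,\U_\perp]\,\M.$$
Since $[\U\,\,\U_\perp]\in\bbO_m$ and multiplying by a matrix with orthonormal columns leaves all singular values unchanged, it follows that $\|P_\U\A + P_{\U_\perp}\B\|_q = \|\M\|_q$, $\|P_\U\A\|_q = \|C\|_q$, and $\|P_{\U_\perp}\B\|_q = \|D\|_q$. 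Thus the claim becomes $\|\M\|_q \leq (\|C\|_q^2 + \|D\|_q^2)^{1/2}$ for $2\leq q\leq\infty$ and $\|\M\|_q \leq (\|C\|_q^q + \|D\|_q^q)^{1/q}$ for $1\leq q\leq 2$.

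The engine of both cases is the identity $\M^\top\M = C^\top C + D^\top D$, so the squared singular values of $\M$ are exactly the eigenvalues of the positive semidefinite matrix $C^\top C + D^\top D$; moreover $C^\top C$ and $D^\top D$ are positive semidefinite with eigenvalues $\sigma_i(C)^2$ and $\sigma_i(D)^2$. For $2\leq q\leq\infty$ I would set $p = q/2\geq 1$ and observe that $\|\M\|_q^2 = \|C^\top C + D^\top D\|_{q/2}$, while $\|C^\top C\|_{q/2} = \|C\|_q^2$ and $\|D^\top D\|_{q/2} = \|D\|_q^2$. Since $q/2\geq 1$, the Schatten-$(q/2)$ norm is a genuine norm, so the triangle inequality gives $\|\M\|_q^2 \leq \|C\|_q^2 + \|D\|_q^2$, which is the desired bound after taking a square root.

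For $1\leq q\leq 2$ the exponent $q/2\in[1/2,1]$ makes the Schatten-$(q/2)$ ``norm'' fail the triangle inequality, so a direct norm argument is unavailable; this is the one genuinely delicate step. Instead I would invoke the trace-subadditivity inequality for concave functions (of Rotfel'd type): if $f:[0,\infty)\to[0,\infty)$ is concave with $f(0)=0$, then $\tr f(X+Y)\leq \tr f(X)+\tr f(Y)$ for any positive semidefinite $X,Y$. Applying this with $f(t)=t^{q/2}$ (concave since $q/2\leq 1$), $X=C^\top C$, and $Y=D^\top D$ yields $\|\M\|_q^q=\tr\big((C^\top C+D^\top D)^{q/2}\big)\leq \tr\big((C^\top C)^{q/2}\big)+\tr\big((D^\top D)^{q/2}\big)=\|C\|_q^q+\|D\|_q^q$, and taking the $q$-th root completes the proof.
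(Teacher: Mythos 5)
Your proof is correct, and for $2\leq q\leq\infty$ it is essentially identical to the paper's: both arguments rest on the Gram identity (the paper works with $\T_1=P_\U\A$, $\T_2=P_{\U_\perp}\B$ satisfying $\T^\top\T=\T_1^\top\T_1+\T_2^\top\T_2$, which is exactly your $\M^\top\M=C^\top C+D^\top D$ after your unitary reduction) together with the triangle inequality for the Schatten-$(q/2)$ norm. Where you genuinely diverge is the delicate range $1\leq q\leq 2$. The paper proceeds by duality: it first establishes the reverse, superadditivity inequality $\|\T\|_{2p}^{2p}\geq\|\T_1\|_{2p}^{2p}+\|\T_2\|_{2p}^{2p}$ for $2p\geq 2$ using a pinching-type fact (the Schatten norm of the diagonal blocks is dominated by that of the whole matrix, cited from Bhatia), interprets this as contractivity of the map $\T\mapsto\mathrm{blockdiag}(\T_1,\T_2)$ in Schatten-$p$ for $p\geq 2$, and then dualizes to obtain $\|\T\|_q^q\leq\|\T_1\|_q^q+\|\T_2\|_q^q$ for $1<q\leq 2$, treating $q=1$ separately by the triangle inequality. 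You instead invoke the Rotfel'd--McCarthy trace inequality $\tr f(X+Y)\leq\tr f(X)+\tr f(Y)$ for nonnegative concave $f$ with $f(0)=0$ and positive semidefinite $X,Y$, applied to $f(t)=t^{q/2}$; this is a correct classical result and yields the bound in one line, uniformly over all of $1\leq q\leq 2$ with no separate $q=1$ case. The trade-off is where the hard content lives: your route outsources it to a standard but nontrivial concave trace theorem and is shorter and more transparent about why concavity of $t^{q/2}$ drives the small-$q$ case, while the paper outsources only the pinching inequality, keeps the rest self-contained, and obtains along the way the superadditivity statement for exponents at least $2$, which has some independent interest.
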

\begin{proof}
Let $\T = P_\U\A + P_{\U_\perp} \B$. We construct $\T_1 = P_{\U} \T = P_\U\A$, $\T_2 = P_{\U_\perp} \T = P_{\U_\perp} \B$. First we have $\T^\top \T = \T_1^\top \T_1 + \T_2^\top \T_2$. So for $p \geq 1$,
\begin{equation*}
	\|\T\|_{2p}^2 = \|\T^\top \T\|_p = \|\T_1^\top \T_1 + \T_2^\top \T_2\|_p \leq \|\T_1^\top \T_1\|_p + \|\T_2^\top \T_2\|_p = \|\T_1\|_{2p}^2 + \|\T_2\|_{2p}^2,
\end{equation*} 
which has proved the first part. 

For the second part, note that when $q = 1$, the inequality holds by triangle inequality. Next we show the inequality holds when $1 < q \leq 2$. Let $\X = \begin{bmatrix} (\T_1^\top \T_1)^{1/2} & (\T_2^\top \T_2)^{1/2}
	\end{bmatrix}$. For any $p \geq 1$ we have
\begin{equation}\label{ineq: whole big than block}
	\|\T_1^\top \T _1 + \T_2^\top \T_2\|_p^p = \| \X \X^\top \|_p^p = \|\X^\top \X\|_p^p \overset{(a)}\geq \|\T_1^\top \T_1\|_p^p + \| \T_2^\top \T_2\|_p^p, 
\end{equation} 
where (a) is because the norm of the diagonal part of a matrix is no greater than the norm of the whole matrix \cite{bhatia1988clarkson}. So we have
\begin{equation} \label{ineq: contraction of A}
   \|\T\|_{2p}^{2p} = 	\|\T^\top \T\|_{p}^{p} = \|\T_1^\top \T _1 + \T_2^\top \T_2\|_{p}^{p} \overset{\eqref{ineq: whole big than block}} \geq \|\T_1^\top \T_1\|_p^p + \| \T_2^\top \T_2\|_p^p = \|\T_1\|_{2p}^{2p} + \|\T_2\|_{2p}^{2p}.
\end{equation}
Define subspaces
 \begin{equation*}
 \begin{split}
 	\cT &= \{ \widebar{\T}: \widebar{\T} = \widebar{\T}_1 + \widebar{\T}_2, \widebar{\T}_1 = P_\U \widebar{\A}, \widebar{\T}_2 = P_{\U_\perp} \widebar{\B} \text{ and } \widebar{\A},\widebar{\B} \in \bbR^{m \times n} \},\\
 	\cT' &= \left\{  \widebar{\T}': \widebar{\T}'= \begin{bmatrix} \widebar{\T}'_1 & \0\\
\0 & \widebar{\T}'_2
\end{bmatrix}, \widebar{\T}'_1 = P_\U \widebar{\A}, \widebar{\T}'_2 = P_{\U_\perp} \widebar{\B} \text{ and } \widebar{\A},\widebar{\B} \in \bbR^{m \times n} \right\}.
\end{split}
\end{equation*}
Consider the linear map $\cA: \widebar{\T} \in \cT \longrightarrow \begin{bmatrix} P_\U \widebar{\T} & \0\\
\0 & P_{\U_\perp}\widebar{\T}
\end{bmatrix} \in \cT'.$ We can verify the adjoint map of $\cA$, $\cA^*$, satisfies $\cA^*\left(\begin{bmatrix} \widebar{\T}'_1 & \0\\
\0 & \widebar{\T}'_2
\end{bmatrix}\right) = \widebar{\T}'_1 + \widebar{\T}'_2$ as for any $\widebar{\T} \in \cT$ and $\widebar{\T}'= \begin{bmatrix} \widebar{\T}'_1 & \0\\
\0 & \widebar{\T}'_2
\end{bmatrix} \in \cT'$, we have $\langle \cA(\widebar{\T}), \widebar{\T}' \rangle = \langle P_\U \widebar{\T},  \widebar{\T}_1' \rangle +  \langle P_{\U_\perp} \widebar{\T},  \widebar{\T}_2' \rangle  = \langle \widebar{\T},\cA^*(\widebar{\T}') \rangle.$
From \eqref{ineq: contraction of A} we have shown 
$\|\cA(\T)\|_p \leq \|\T\|_p $, i.e., $\cA$ is contractive with respect to $\|\cdot\|_{p}$ for $p \geq 2$. Set $1\leq p< \infty$ such that $1/q + 1/p = 1$. Since $\|\cdot\|_p$ and $\|\cdot\|_q$ are dual norms, we have for $1<q \leq 2$:
\begin{equation*}
	\begin{split}
		\|\T\|_q = \left\| \cA^*\left(\begin{bmatrix} \T_1 & \0\\
\0 & \T_2
\end{bmatrix}\right) \right\|_q &= \sup_{\X: \|\X\|_p \leq 1} \left\langle \cA^*\left(\begin{bmatrix} \T_1 & \0\\
\0 & \T_2
\end{bmatrix}\right), \X  \right\rangle \\
&=  \sup_{\X: \|\X\|_p \leq 1} \left\langle \begin{bmatrix} \T_1 & \0\\
\0 & \T_2
\end{bmatrix},  \cA(\X)  \right\rangle\\
& \overset{(a)}\leq \sup_{\X: \|\X\|_p \leq 1} \left\| \begin{bmatrix} \T_1 & \0\\
\0 & \T_2
\end{bmatrix} \right\|_q \|\cA(\X)\|_p\\
& \leq (\|\T_1\|^q_q + \|\T_2\|_q^q)^{1/q}.  	
\end{split}
\end{equation*} Here (a) is an application of \cite[Lemma II.3.4]{stewart1990matrix} and H\"older's inequality.
This shows $\|\T\|_{q}^{q} \leq \|\T_1\|_{q}^{q} + \|\T_2\|_{q}^{q}$ and finishes the proof.
\end{proof}

Next, we prove Theorem \ref{th: low rank matrix reconstruction} based on Theorem \ref{th:SVD-projection} and Lemma \ref{th: low rank matrix reconstruction}.
\begin{proof}[Proof of Theorem \ref{th: low rank matrix reconstruction}]
For $1 \leq q < \infty$, since $\widehat{\A} = \B_{\max(r)}$ and $\widehat\U$ is composed of the first $r$ left singular vectors of $\B$, we have $\widehat\A = P_{\widehat\U}\B$ and
	\begin{equation*}
		\begin{split}
			\left\| \widehat{\A} - \A \right\|_q & = \left\|P_{\widehat{\U}}\B - P_{\widehat{\U}} \A - P_{\widehat{\U}_\perp}\A \right\|_q = \left\|P_{\widehat{\U}}\Z - P_{\widehat{\U}_\perp}\A \right\|_q\\
			& \overset{(a)} \leq \left\{ \begin{array}{ll}
				\left(\left\|P_{\widehat{\U}}\Z \right\|^q_q + \left\| P_{\widehat{\U}_\perp} \A \right\|^q_q \right)^{1/q}, & 1\leq q \leq 2;\\
				\left(\left\|P_{\widehat{\U}}\Z \right\|^2_q + \left\| P_{\widehat{\U}_\perp} \A \right\|^2_q \right)^{1/2}, & 2\leq q < \infty
			\end{array}\right. \\
			& \overset{(b)}\leq \left\{ \begin{array}{ll}
				(2^q + 1)^{1/q} \left\|\Z_{\max(r)}\right\|_q, & 1\leq q \leq 2;\\
				\sqrt{5} \left\|\Z_{\max(r)}\right\|_q, & 2\leq q < \infty.
			\end{array} \right.
		\end{split}
	\end{equation*}
Here, (a) is due to Lemma \ref{lm:partition-l_q-norm} and (b) is due to Theorem \ref{th:SVD-projection}. For $q = \infty$,
\begin{equation*}
\begin{split}
	\| \widehat{\A} - \A \| \leq \| \widehat{\A} - \B \| + \|\A - \B\| \overset{(a)}\leq 2 \| \A - \B  \| \leq 2 \| \Z  \| = 2\|\Z_{\max(r)}\|.
\end{split}
\end{equation*} 
Here $(a)$ comes from the fact that $\widehat\A$ is the best rank-$r$ approximation of $\B$.
\end{proof}

As discussed in Section \ref{sec:intro}, one can derive the matrix estimation error bounds relying on $\|\Z\|_q$ or $r^{1/q}\|\Z\|$ via the existing perturbation theory in the literature. The following example illustrates that our result can be much sharper when the singular values of $\Z$ has some polynomial decay. 
	\begin{Example}
	    Suppose $\Z$ satisfies that $\sigma_k(\Z) = k^{-1/q}$ for $q > 1$. Then
	    \begin{equation*}
	        \|\Z_{\max(r)}\|_q = \left(\sum_{k=1}^r k^{-1}\right)^{1/q} \approx (1+\log r)^{1/q},
	    \end{equation*}
        which can be much smaller than
        \begin{equation*}
	        \begin{split}
	            \|\Z\|_q & = \left(\sum_{k=1}^{m \wedge n} k^{-1}\right)^{1/q} \approx (1+\log (m \wedge n))^{1/q},\qquad    r^{1/q}\|\Z\| = r^{1/q}.
	        \end{split}
	    \end{equation*}
	\end{Example}
	
\subsection{Proof of Theorem \ref{th:SVD-projection} }\label{sec: proof-of-th2}
In the this section, we focus on the proof of Theorem \ref{th:SVD-projection}. We first introduce several additional lemmas on the properties of matrix singular values and norms, then present the proof of Theorem \ref{th:SVD-projection} and discussions. Recall that a matrix norm $\|\cdot\|$ is unitarily invariant if $\|\A\| = \|\U\A \V\|$ for any matrix $\A$ and orthogonal matrices $\U, \V$. Define $\Phi_q(\x) := \max_{1\leq i_1 <\ldots < i_r \leq n}\left(\sum_{j=1}^r |x_{i_j}|^q\right)^{1/q}$ with $q \geq 1$ for any $\x\in \bbR^n.$ We have the following Lemmas for $\|(\cdot)_{\max(r)}\|_q$. 
\begin{Lemma}\label{lem: triangle of trun schatten q} Suppose $q \geq 1$. Then $\Phi_q(\cdot)$ is a symmetric gauge function and $\|(\cdot)_{\max(r)}\|_q$ is a unitarily invariant matrix norm.
\end{Lemma}

\begin{proof} First, $\|(\cdot)_{\max(r)}\|_q$ is a unitarily invariant matrix norm follows by von Neumannan's Theory \cite[Theorem II.3.6]{stewart1990matrix} if we can show $\Phi_q(\cdot)$ is a symmetric gauge function. Recall the definition of symmetric gauge function from Section \ref{sec: notation}, it is easy to see we just need to show $\Phi_q (\x + \y) \leq \Phi_q(\x) + \Phi_q (\y)$ for any $\x,\y \in \bbR^n$. To show this, for any $\x \in \bbR^n$ and $\z \in \bbR^n$ with $\|\z\|_0 \leq r$ (here $\|\z\|_0$ denotes the number of non-zero entries in $\z$), by H\"older's inequality and the definition of $\Phi_q(\cdot)$, we have $\langle \x, \z \rangle \leq \|\z\|_p \Phi_q(\x) $ for $p$ such that $1/p + 1/q = 1$. Moreover, the equality is achieved when the equality condition in H\"older's inequality is satisfied. So
\begin{equation} \label{eq: max-r-dual-representation}
	\Phi_q(\x) = \sup_{\z: \|\z\|_p \leq 1, \|\z\|_0 \leq r} \langle \x, \z \rangle. 
\end{equation} Thus, we have
\begin{equation*}
\begin{split}
	\Phi_q (\x + \y) \overset{ \eqref{eq: max-r-dual-representation} } = \sup_{\z: \|\z\|_p \leq 1, \|\z\|_0 \leq r} \langle \x + \y, \z \rangle &\leq \sup_{\z: \|\z\|_p \leq 1, \|\z\|_0 \leq r} \langle \x, \z \rangle + \sup_{\z: \|\z\|_p \leq 1, \|\z\|_0 \leq r} \langle \y, \z \rangle\\
	& \overset{ \eqref{eq: max-r-dual-representation} } = \Phi_q(\x) + \Phi_q(\y). 
\end{split}
\end{equation*} This shows $\Phi_q(\cdot)$ is a symmetric gauge function and finishes the proof of this lemma.
\end{proof} 

Next, the following lemma introduces a dual characterization of the truncated matrix Schatten-$q$ norm. 
\begin{Lemma}[Dual representation of Truncated Schatten-$q$ norm]\label{lm: charac of Schatten-q norm}
Let $\X$ be a $m$-by-$n$ real-valued matrix. For any non-negative integer $r \leq m \wedge n$, $q \geq 1$ and $1/p + 1/q = 1$, we have
    \begin{equation}\label{eq: truncated schatten q norm}
        \|\X_{\max(r)}\|_p = \sup_{\|\B\|_q \leq 1, \rank(\B) \leq r}  \langle \B, \X \rangle.
    \end{equation} 
    If $\rank(\X) \leq r$, then 
    \begin{equation}\label{eq: schatten q norm of rank r matrix}
         \|\X\|_p = \sup_{\|\B\|_q \leq 1, \rank(\B) \leq r}  \langle \B, \X \rangle.
    \end{equation}
\end{Lemma}
\begin{proof} We first prove \eqref{eq: truncated schatten q norm}. Since $\Phi_q(\x) $ is a symmetric gauge function as we have shown in Lemma \ref{lem: triangle of trun schatten q} and its dual is $\Phi_p(\cdot)$ with $1/p + 1/q = 1$, \eqref{eq: truncated schatten q norm} follows from \cite[Lemma II.3.5]{stewart1990matrix}. Finally, \eqref{eq: schatten q norm of rank r matrix} is a special case of \eqref{eq: truncated schatten q norm}. 
\end{proof}

\begin{Lemma}\label{lm: optimality of SVD in truncated Schatten-q norm}
	Given matrix $\A \in \bbR^{m \times n}$ and any non-negative integer $k \leq m \wedge n$, for any matrix $\M$ with $\rank(\M) \leq r$, we have
	\begin{equation*}
		\left\|\left(\A_{-\max(r)}\right)_{\max(k)} \right\|_q \leq \| \left(\A - \M\right)_{\max(k)}  \|_q.
	\end{equation*} 
	The equality is achieved when $\M = \A_{\max(r)}$.
\end{Lemma}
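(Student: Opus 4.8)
The plan is to reduce the inequality to a pointwise comparison of singular values. Since $(\cdot)_{\max(k)}$ denotes the rank-$k$ truncated SVD, both sides are Schatten-$q$ norms of the top $k$ singular values of the respective matrices. Writing $\sigma_i(\cdot)$ for the $i$-th largest singular value, the left-hand side equals $\left(\sum_{i=1}^k \sigma_{r+i}(\A)^q\right)^{1/q}$, because the remainder $\A_{\max(-r)}$ has singular values $\sigma_{r+1}(\A), \sigma_{r+2}(\A), \ldots$; meanwhile the right-hand side equals $\left(\sum_{i=1}^k \sigma_i(\A-\M)^q\right)^{1/q}$. Hence it suffices to establish the termwise bound $\sigma_{r+i}(\A) \le \sigma_i(\A - \M)$ for every $1 \le i \le k$.

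The key step is Weyl's inequality for singular values, $\sigma_{a+b-1}(\X+\Y) \le \sigma_a(\X) + \sigma_b(\Y)$. Applying it with $\X = \A - \M$, $\Y = \M$, $a = i$, and $b = r+1$ gives $\sigma_{r+i}(\A) \le \sigma_i(\A - \M) + \sigma_{r+1}(\M)$. Because $\rank(\M) \le r$, we have $\sigma_{r+1}(\M) = 0$, so the desired termwise bound follows. Raising both sides to the power $q \ge 1$, summing over $i = 1, \ldots, k$, and taking the $q$-th root then yields the claimed inequality, where the monotonicity of $t \mapsto t^q$ on $[0,\infty)$ is all that is needed. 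Alternatively, one could first derive the partial-sum (Ky Fan type) inequalities $\sum_{i=1}^s \sigma_{r+i}(\A) \le \sum_{i=1}^s \sigma_i(\A-\M)$ and invoke Lemma \ref{lm:sequence}, but the pointwise route is the most direct.

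For the equality case, I would substitute $\M = \A_{\max(r)}$, so that $\A - \M = \A_{\max(-r)}$ has exactly the singular values $\sigma_{r+1}(\A), \sigma_{r+2}(\A), \ldots$; then $\sigma_i(\A-\M) = \sigma_{r+i}(\A)$ for every $i$, turning each termwise inequality into an equality and hence producing equality throughout.

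There is no serious obstacle here: the content is essentially Weyl's inequality together with careful bookkeeping of indices. The only point demanding attention is the correct choice $a = i$, $b = r+1$ in Weyl's inequality, so that the rank constraint $\rank(\M) \le r$ annihilates the perturbation term $\sigma_{r+1}(\M)$; this is precisely where the hypothesis $\rank(\M)\le r$ enters.
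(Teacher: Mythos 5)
Your proof is correct, but it follows a genuinely different route from the paper's. The paper's proof is a one-liner at a higher level of abstraction: it first establishes (Lemma \ref{lem: triangle of trun schatten q}) that $\|(\cdot)_{\max(k)}\|_q$ is a unitarily invariant matrix norm, and then invokes the Eckart--Young--Mirsky theorem, which asserts that the truncated SVD $\A_{\max(r)}$ is the best rank-$r$ approximation of $\A$ simultaneously in \emph{every} unitarily invariant norm. You instead unpack the singular-value estimate underlying that theorem: Weyl's inequality $\sigma_{a+b-1}(\X+\Y)\le \sigma_a(\X)+\sigma_b(\Y)$ with $a=i$, $b=r+1$ yields the termwise dominance $\sigma_{r+i}(\A)\le \sigma_i(\A-\M)$ whenever $\rank(\M)\le r$, and the claim follows by monotonicity of $t\mapsto t^q$; this is essentially the classical proof of Mirsky's theorem specialized to the norm at hand, so you are re-deriving the black box the paper cites. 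Your route buys self-containedness (no appeal to norm axioms or unitary invariance of the truncated Schatten-$q$ norm), a strictly stronger termwise conclusion, and an argument that extends verbatim to $q=\infty$ by taking $i=1$; the paper's route buys brevity and reuses Lemma \ref{lem: triangle of trun schatten q}, which it needs elsewhere anyway. One small bookkeeping remark: Weyl's inequality formally requires $a+b-1\le m\wedge n$, so for indices with $r+i>m\wedge n$ you should invoke the convention $\sigma_{r+i}(\A)=0$, under which the termwise bound is trivial; your equality verification via $\A-\A_{\max(r)}=\A_{\max(-r)}$ is exactly as in the paper and needs no change.
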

\begin{proof}
By the well-known Eckart-Young-Mirsky Theorem \cite{eckart1936approximation,mirsky1960symmetric,golub1987generalization}, the truncated SVD achieves the best low-rank matrix approximation in any unitarily invariant norm. This lemma follows from the Eckart-Young-Mirsky Theorem and the fact that $\|(\cdot)_{\max(k)}\|_q$ is a unitarily invariant matrix norm (Lemma \ref{lem: triangle of trun schatten q}). 
\end{proof}
Now we are in position to prove Theorem \ref{th:SVD-projection}.
\begin{proof}[Proof of Theorem \ref{th:SVD-projection}]
We only study $\|P_{\widehat{\U}_\perp}\A\|_q$ since the proof of the upper bound of $\|\A P_{\widehat{\V}_\perp}\|_q$ follows by symmetry. Denote $\sum_{k=1}^r \sigma_{k}(\A)\u_k \v_k^\top$ as a singular value decomposition of $\A$. Since $\rank(P_{\widehat{\U}_\perp}\A) \leq \rank(\A)= r$, for $p \geq 1$ satisfying $1/p + 1/q = 1$, we have
\begin{align}
\left\|P_{\widehat{\U}_\perp}\A \right\|_q \overset{(a)}= & \sup_{\|\X\|_p \leq 1, \rank(\X) \leq r } \langle \Proj_{\widehat{\U}_\perp} \A, \X \rangle  \nonumber\\
= &   \sup_{\|\X\|_p \leq 1, \rank(\X) \leq r } \langle \Proj_{\widehat{\U}_\perp} \left(\A +\Z \right) - \Proj_{\widehat{\U}_\perp} \Z , \X \rangle \nonumber\\
\leq & \sup_{\|\X\|_p \leq 1, \rank(\X) \leq r } \langle \Proj_{\widehat{\U}_\perp} (\A +\Z), \X \rangle + \sup_{\|\X\|_p \leq 1, \rank(\X) \leq r } \langle \Proj_{\widehat{\U}_\perp} \Z, \X \rangle \nonumber\\
\overset{(b)}\leq & \sup_{\|\X\|_p \leq 1, \rank(\X) \leq r } \|\X\|_p \left\|  \left(\Proj_{\widehat{\U}_\perp} \left(\A +\Z \right)\right)_{\max (r)} \right\|_q \nonumber\\
 & + \sup_{\|\X\|_p \leq 1, \rank(\X) \leq r } \|\X\|_p \left\| \left(\Proj_{\widehat{\U}_\perp} \Z \right)_{\max (r)} \right\|_q\nonumber\\
\overset{(c)}\leq & \min_{\rank(\M) \leq r} \left\|  \left(\A +\Z -\M \right)_{\max (r)} \right\|_q + \left\| \left(\Proj_{\widehat{\U}_\perp} \Z \right)_{\max (r)} \right\|_q \nonumber\\
\leq & \left\| \left(\A +\Z  - P_{\U}(\A + \Z) \right)_{\max (r)}  \right\|_q + \left\| \left(\Proj_{\widehat{\U}_\perp} \Z \right)_{\max (r)} \right\|_q \nonumber\\
\leq & \left\| \left(\Proj_{\U_\perp} \Z \right)_{\max (r)} \right\|_q + \| (\Proj_{\widehat{\U}_\perp} \Z )_{\max (r)} \|_q \label{ineq:projection}\\
\leq &  \|\Z_{\max (r)}\|_q + \|\Z_{\max (r)}\|_q \leq 2\|\Z_{\max (r)}\|_q.\nonumber
\end{align}
Here (a), (b) are due to Lemma \ref{lm: charac of Schatten-q norm} and (c) is due to Lemma \ref{lm: optimality of SVD in truncated Schatten-q norm}.
\end{proof}

We make several remarks on Theorems \ref{th:SVD-projection}. 	
	
First, Theorem \ref{th:SVD-projection} may not be simply implied by the classic results. For example, the classic Wedin's $\sin\Theta$ Theorem \cite{wedin1972perturbation},
\begin{equation} \label{ineq:wedin-perturbation}
	\max\left\{\|\sin\Theta(\U, \widehat{\U})\|_q, \|\sin \Theta(\V, \widehat{\V})\|_q\right\} \leq \frac{\max \{ \|\Z\widehat{\V}\|_q, \|\widehat{\U}^\top \Z\|_q \} }{\sigma_r(\B)},
\end{equation} 
yields
\begin{equation} \label{ineq: matrix projection bound via sin Theta}
	\begin{split}
		\|P_{\widehat{\U}_\perp} \A\|_q & = \| \widehat{\U}^\top_\perp \U \bSigma_1 \V^\top \|_q \overset{(a)}\leq \| \widehat{\U}_\perp^\top \U  \|_q \sigma_1(\A) = \|\sin\Theta(\U, \widehat{\U})\|_q\sigma_1(\A) \\
		& \leq \max \left\{ \| \Z \widehat{\V}  \|_q, \|\widehat{\U}^\top \Z\|_q \right\} \frac{\sigma_1(\A) }{\sigma_r(\B)},
	\end{split}
\end{equation} here (a) is by \cite[Theorem II.3.9]{stewart1990matrix}.

This bound \eqref{ineq: matrix projection bound via sin Theta} can be less sharp or practical for its dependency on $\sigma_1(\A)/\sigma_r(\B)$. As pointed out by \cite{udell2019big}, the spectrum of large matrix datasets arising from applications often decay fast. If the singular values of $\A,\B$ decay fast, $\sigma_1(\A)/\sigma_r(\B)\gg 1$ and \eqref{ineq: matrix projection bound via sin Theta} can be loose. In contrast, our bound \eqref{ineq:P_U-P_V} in Theorem \ref{th:SVD-projection} is free of any ratio of singular values, which can be a significant advantage in practice. We will further illustrate the difference between \eqref{ineq:P_U-P_V} and \eqref{ineq: matrix projection bound via sin Theta} by simulation in Section \ref{sec: numerical matrix perturbation projection}. 

Second, it is noteworthy by \eqref{ineq:projection} in the proof of Theorem \ref{th:SVD-projection}, we have actually proved 
\begin{equation}\label{ineq:projection-error}
\begin{split}
	&\|P_{\widehat{\U}_\perp}\A\|_q \leq \left\|\left(P_{\widehat\U_{\perp}}\Z\right)_{\max(r)}\right\|_q + \left\|\left(P_{\U_{\perp}}\Z\right)_{\max(r)}\right\|_q \\ 
	& \|\A P_{\widehat{\V}_\perp}\|_q \leq   \left\|\left(\Z P_{\widehat\V_{\perp}}\right)_{\max(r)}\right\|_q + \left\|\left(\Z P_{\V_{\perp}}\right)_{\max(r)}\right\|_q
\end{split}
\end{equation}
under the setting of Theorem \ref{th:SVD-projection}. The bound \eqref{ineq:projection-error} can be better than the one in Theorem \ref{th:SVD-projection} in some scenarios. For example, when $\Z$ is (or is close to) $\U \Sigma_\Z \V^\top$ for some $r$-by-$r$ matrix $\Sigma_\Z$, the bound in \eqref{ineq:projection-error} is smaller than $\|\Z_{\max(r)}\|_q$. On the other hand, the proposed bound in Theorem \ref{th:SVD-projection} is strong enough for proving Theorem \ref{th: low rank matrix reconstruction}, does not involve $P_{\widehat\U_{\perp}}$ or $P_{\widehat\V_{\perp}}$, and can be more convenient to use.

\section{Lower Bounds}\label{sec:lower-bound}
The following Theorem \ref{th:lower bound} shows that the error upper bound for the rank-$r$ truncated SVD estimator $\widehat \A$ in Theorem \ref{th: low rank matrix reconstruction} is sharp. 
\begin{Theorem}\label{th:lower bound}
For any $\varepsilon>0$ and $q \geq 1$, there exist $\A$, $\B$, and $\Z\neq 0$ such that $\rank(\A) = r$, $\B=\A+\Z$, and
$$\|\widehat{\A} - \A\|_q > ((2^q+1)^{1/q}-\varepsilon)\|\Z_{\max(r)}\|_q.$$
\end{Theorem}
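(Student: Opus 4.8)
The goal is to show that the constant $(2^q+1)^{1/q}$ in Theorem~\ref{th: low rank matrix reconstruction} is achievable in the limit, i.e.\ I must construct a triplet $(\A, \Z, \B)$ making the ratio $\|\widehat\A - \A\|_q / \|\Z_{\max(r)}\|_q$ arbitrarily close to $(2^q+1)^{1/q}$. The natural strategy is to trace through the proof of Theorem~\ref{th: low rank matrix reconstruction} and identify when each inequality is (nearly) tight. Inspecting that proof, equality requires two things: (i) the splitting bound of Lemma~\ref{lm:partition-l_q-norm} should be nearly tight, which for $1 \le q \le 2$ asks that $P_{\widehat\U}\Z$ and $P_{\widehat\U_\perp}\A$ have essentially ``orthogonal'' support so that $\|\widehat\A - \A\|_q^q \approx \|P_{\widehat\U}\Z\|_q^q + \|P_{\widehat\U_\perp}\A\|_q^q$; and (ii) the projection bound of Theorem~\ref{th:SVD-projection} should be nearly saturated, i.e.\ $\|P_{\widehat\U_\perp}\A\|_q \approx 2\|\Z_{\max(r)}\|_q$, while simultaneously $\|P_{\widehat\U}\Z\|_q \approx \|\Z_{\max(r)}\|_q$. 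Combining these would give $\|\widehat\A - \A\|_q^q \approx (2^q + 1)\|\Z_{\max(r)}\|_q^q$, as desired. The plan is therefore to reverse-engineer a concrete example realizing both simultaneously.

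The cleanest realization is to work with $r=1$ and a low-dimensional explicit matrix, then note the general $r$ and general-dimension cases follow by taking block-diagonal copies. First I would look for $\A$ and $\Z$ built from a small number of orthonormal vectors where the SVD of $\B = \A + \Z$ can be computed by hand. A promising ansatz: take $\A = \sigma\, \u_1\v_1^\top$ rank one, and choose $\Z$ so that its action pushes the top singular vector $\widehat\U$ of $\B$ to be (nearly) orthogonal to $\u_1$, forcing $P_{\widehat\U_\perp}\A \approx \A$ so that $\|P_{\widehat\U_\perp}\A\|_q \approx \sigma$, while arranging $\|\Z_{\max(1)}\|_q \approx \sigma/2$ so that the factor $2$ in Theorem~\ref{th:SVD-projection} is attained. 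Concretely one wants $\Z$ to place a dominant singular direction of $\B$ away from $\mathrm{span}(\u_1)$; a construction using vectors at roughly $45^\circ$ angles (so that both the retained component $P_{\widehat\U}\Z$ and the discarded component $P_{\widehat\U_\perp}\A$ contribute comparably and along orthogonal directions) is the most likely candidate. I would then verify directly that $\widehat\A - \A$ decomposes into two orthogonal pieces of the right relative sizes, yielding the $(2^q+1)^{1/q}$ factor.

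The main obstacle is the \emph{simultaneity}: the two tightness conditions pull in tension. Saturating Theorem~\ref{th:SVD-projection} (projection error $\approx 2\|\Z_{\max(r)}\|_q$) demands that $\Z$ disturbs the subspace maximally, but keeping $\|P_{\widehat\U}\Z\|_q \approx \|\Z_{\max(r)}\|_q$ and ensuring the orthogonal-support condition of Lemma~\ref{lm:partition-l_q-norm} constrains $\Z$ in the opposite direction; reconciling both typically forces one to introduce a small parameter and take a limit (hence the ``$-\varepsilon$'' in the statement, rather than exact equality). I expect the construction to depend on a scalar $\delta \to 0$ (e.g.\ a rotation angle or a perturbation magnitude) and the ratio to approach $(2^q+1)^{1/q}$ only as $\delta \to 0$; choosing $\delta$ small enough to beat any prescribed $\varepsilon$ then closes the argument. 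A secondary check is confirming that the hand-computed $\widehat\U, \widehat\V$ are genuinely the top singular vectors of $\B$ (i.e.\ the relevant singular value is strictly the largest), which I would verify by an explicit eigenvalue computation of the small $\B^\top\B$ block.
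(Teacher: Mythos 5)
Your proposal is correct and is essentially the paper's own argument: the paper's construction $\A = \rmdiag(2\I_r,\0,\0)$, $\Z = \rmdiag(-(1+\eta)\I_r,\I_r,\0)$, $\B = \rmdiag((1-\eta)\I_r,\I_r,\0)$ is exactly what your sketch produces --- the perturbation cancels the signal down to $(1-\eta)\I_r$ while installing a spurious orthogonal block $\I_r$ that is slightly larger, so the truncated SVD picks the wrong subspace ($\widehat\U$ exactly orthogonal to the column space of $\A$), giving $\|\widehat\A - \A\|_q = (2^q r + r)^{1/q}$ against $\|\Z_{\max(r)}\|_q = (1+\eta)r^{1/q}$, with the tie-breaking $\eta \to 0$ playing the role of your limiting parameter $\delta$. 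The only detail to correct is the ``$45^\circ$'' parenthetical: saturation requires $\widehat\U$ to be (nearly) exactly orthogonal to $\u_1$, as your preceding sentence says and as the example above achieves; a literal $45^\circ$ tilt would cap $\|P_{\widehat\U_\perp}\A\|_q$ at $\sigma/\sqrt{2}$ and lose the factor of $2$.
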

\begin{proof}
Without loss of generality we assume $0<\varepsilon<1$. We choose a value $\eta \in (0,  \frac{(2^q+1)^{1/q}}{(2^q+1)^{1/q} - \varepsilon} - 1)$. Define
\begin{equation*}
    \A = 
    \begin{bmatrix}
        2\I_r & \mathbf{0}_{r\times r} & \mathbf{0}\\
        \mathbf{0}_{r\times r} & \mathbf{0}_{r\times r} & \mathbf{0}\\
        \mathbf{0} & \mathbf{0} & \mathbf{0}
    \end{bmatrix}, \quad \Z = \begin{bmatrix}
        -(1+\eta)\I_r & \mathbf{0}_{r\times r} & \mathbf{0}\\
        \mathbf{0}_{r\times r} & \I_r & \mathbf{0}\\
        \mathbf{0} & \mathbf{0} & \mathbf{0}
    \end{bmatrix},
\end{equation*} and 
\begin{equation*}
    \B = \begin{bmatrix}
    (1-\eta)\I_r & \mathbf{0}_{r\times r} & \mathbf{0}\\
    \mathbf{0}_{r\times r} & \I_r & \mathbf{0}\\
    \mathbf{0} & \mathbf{0} & \mathbf{0}
    \end{bmatrix}.
\end{equation*}
Then, 
\begin{equation*}
    \|\widehat{\A} - \A\|_q = \left\|\begin{bmatrix}
    -2\I_r & \mathbf{0}_{r\times r}\\
    \mathbf{0}_{r\times r} & \I_r
    \end{bmatrix}\right\|_q = (2^q r + r)^{1/q},\quad \|\Z_{\max(r)}\|_q = (1+\eta) r^{1/q}. 
\end{equation*}
We thus have
\begin{equation*}
    \|\widehat{\A} - \A\|_q > ((2^q + 1)^{1/q} - \varepsilon)\|\Z_{\max(r)}\|_q.
\end{equation*}
\end{proof}
Theorems \ref{th: low rank matrix reconstruction} and \ref{th:lower bound} together imply that the constants in \eqref{ineq:hat-A-A} are not improvable when $1\leq q\leq 2$ and $q = \infty$. For $2 < p < \infty$, it would be an interesting future work to close the gap between the upper bound ($\sqrt{5}\|\Z_{\max(r)}\|_q$) and the lower bound ($(2^q+1)^{1/q}\|\Z_{\max(r)}\|_q$). 

Apart from checking the sharpness of the upper bound \eqref{ineq:hat-A-A}, another natural question is, whether the rank-$r$ truncated SVD estimator is an optimal estimator in estimating $\A$. To answer this question, we consider the minimax estimation error lower bound among all possible data-dependent procedures $\widecheck{\A} = \widecheck{\A}(\B)$ (i.e., $\widecheck{\A}$ is a deterministic or random function of matrix $\B$). We specifically focus on the following class of $(\widetilde{\A}, \widetilde{\Z}, \widetilde{\B})$ triplets:
\begin{equation*}
	\mathcal{F}_r(\xi) = \left\{ (\widetilde{\A}, \widetilde{\Z},\widetilde{\B}): \widetilde\B = \widetilde\A + \widetilde\Z,\rank(\widetilde{\A}) = r, \left\| \widetilde{\Z}_{\max(r)} \right\|_q \leq \xi \right\}.
\end{equation*} 
Here, $\xi$ corresponds to $\left\|\Z_{\max(r)}\right\|_q$ in the context of Theorem \ref{th: low rank matrix reconstruction}.
\begin{Theorem}[Schatten-$q$ minimax lower bound] \label{th: matrix reconstruction lower bound} 
For the low-rank perturbation model, if $m \wedge n \geq 2r$, then, for any $q \geq 1$, we have 
	\begin{equation*}
		\inf_{\widecheck{\A}} \sup_{(\widetilde{\A}, \widetilde{\Z},\widetilde{\B}) \in \mathcal{F}_r(\xi)} \left\| \widecheck{\A} - \widetilde{\A}  \right\|_q \geq 2^{1/q-1} \xi.
	\end{equation*}
Here the infimum is taken over all the estimation procedures. 
\end{Theorem}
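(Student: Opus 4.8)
The plan is to prove this deterministic minimax bound by a two-point (Le Cam-type) reduction. The crucial structural fact is that the observation map $(\widetilde\A,\widetilde\Z,\widetilde\B)\mapsto \widetilde\B$ is far from injective on $\mathcal{F}_r(\xi)$: I will exhibit two triplets $(\A_1,\Z_1,\B)$ and $(\A_2,\Z_2,\B)$, both lying in $\mathcal{F}_r(\xi)$, that share the \emph{same} observation $\B$. Since any procedure $\widecheck{\A}=\widecheck{\A}(\widetilde\B)$ depends on the data only through the observation, it must return the same output on this common input, and the triangle inequality gives, realization by realization,
\[
\|\widecheck{\A}-\A_1\|_q+\|\widecheck{\A}-\A_2\|_q\ \geq\ \|\A_1-\A_2\|_q .
\]
Hence $\max_{i\in\{1,2\}}\|\widecheck{\A}-\A_i\|_q\geq \tfrac12\|\A_1-\A_2\|_q$ (taking expectations first if $\widecheck{\A}$ is randomized, as the inequality is pointwise). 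Because both triplets are feasible, the supremum over $\mathcal{F}_r(\xi)$ is at least this maximum, and the infimum over procedures preserves it; so the theorem reduces to building a pair with $\|\A_1-\A_2\|_q\geq 2^{1/q}\xi$.

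For the construction I would use the dimension budget $m\wedge n\geq 2r$ to place the two competing signals on orthogonal coordinate blocks. Assuming $\xi>0$ (the case $\xi=0$ is trivial), set $c=\xi r^{-1/q}$, take $\B=\0$, and
\[
\A_1=c\begin{bmatrix}\I_r&\0&\0\\ \0&\0&\0\\ \0&\0&\0\end{bmatrix},\qquad
\A_2=c\begin{bmatrix}\0&\0&\0\\ \0&\I_r&\0\\ \0&\0&\0\end{bmatrix},
\]
where the blocks partition rows into sizes $r,r,m-2r$ and columns into $r,r,n-2r$. Then $\rank(\A_1)=\rank(\A_2)=r$, and the induced perturbations $\Z_i=\B-\A_i=-\A_i$ are themselves rank-$r$, so $(\Z_i)_{\max(r)}=\Z_i$ and $\|(\Z_i)_{\max(r)}\|_q=\|\A_i\|_q=c\,r^{1/q}=\xi$. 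Thus both $(\A_i,\Z_i,\B)$ belong to $\mathcal{F}_r(\xi)$ and share the observation $\B=\0$.

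Finally, since the two signals occupy disjoint diagonal blocks, $\A_1-\A_2=c\begin{bmatrix}\I_r&\0&\0\\ \0&-\I_r&\0\\ \0&\0&\0\end{bmatrix}$ has exactly $2r$ nonzero singular values, all equal to $c$; therefore $\|\A_1-\A_2\|_q=c\,(2r)^{1/q}=2^{1/q}\xi$, and the reduction of the first paragraph yields $\tfrac12\|\A_1-\A_2\|_q=2^{1/q-1}\xi$, as claimed. I do not expect a genuine obstacle here: this is a standard two-point lower bound, and the only real design decision is the orthogonal placement of the two rank-$r$ signals. That single choice does double duty, (i) keeping each perturbation rank-$r$, so that the truncated constraint $\|\widetilde\Z_{\max(r)}\|_q\leq\xi$ is met with equality and costs nothing, and (ii) making $\A_1-\A_2$ of rank $2r$ so that its Schatten-$q$ norm picks up the factor $2^{1/q}$ relative to each individual signal. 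The hypothesis $m\wedge n\geq 2r$ enters exactly once, to accommodate the two orthogonal rank-$r$ blocks, and the extension to randomized procedures is immediate because the displayed triangle inequality holds for every realization.
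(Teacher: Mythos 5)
Your proof is correct and follows essentially the same route as the paper's: a two-point argument in which two feasible triplets with rank-$r$ signals placed on orthogonal diagonal blocks share the same observation, so the triangle inequality forces any estimator to incur error at least $\tfrac12\|\A_1-\A_2\|_q = 2^{1/q-1}\xi$. The only difference is cosmetic—you take the common observation to be $\B=\0$ with $\Z_i=-\A_i$, while the paper takes $\B$ with both blocks present and nonnegative perturbations—and this changes nothing in the argument.
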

\begin{proof}
The proof is done by construction. We construct 
\begin{equation*}
	\Z_1 = \left(\begin{array}{ccc}
		  \0_{r \times r} & \0 & \0\\
		\0 & \frac{\xi}{r^{1/q}} \I_r & \0\\
		\0 & \0 & \0
	\end{array} \right),\quad \widebar{\A}_1 =  \left( \begin{array}{c c c}
		  \frac{\xi }{r^{1/q}} \I_r & \0 & \0\\
		\0 & \0_{r \times r} & \0\\
		\0 & \0 & \0
	\end{array} \right),
\end{equation*} and 
\begin{equation*}
	\Z_2 =  \left( \begin{array}{c c c}
		  \frac{\xi}{ r^{1/q}}\I_r & \0 & \0\\
		\0 & \0_{r \times r} & \0\\
		\0 & \0 & \0
	\end{array} \right),\quad \widebar{\A}_2 =  \left( \begin{array}{c c c}
		  \0_{r \times r} & \0 & \0\\
		\0 & \frac{\xi }{r^{1/q}} \I_r & \0\\
		\0 & \0 & \0
	\end{array} \right).
\end{equation*}

By the construction above, we have $\| (\Z_1)_{\max(r)} \|_q = \xi$, $\|(\Z_2)_{\max(r)} \|_q = \xi$, and $\widebar{\A}_1 +  \Z_1 = \widebar{\A}_2 + \Z_2$. So
\begin{equation*}
	\begin{split}
		& \inf_{\widecheck{\A}} \sup_{(\widetilde{\A}, \widetilde{\Z}, \widetilde{\B}) \in \mathcal{F}_r(\xi)} \left\| \widecheck{\A} - \widetilde{\A}  \right\|_q \geq \inf_{\widecheck{\A}} \left(\max \left\{ \| \widecheck{\A} - \widebar{\A}_1 \|_q,  \| \widecheck{\A} - \widebar{\A}_2 \|_q  \right\} \right)\\
		\geq & \frac{1}{2} \inf_{\widecheck{\A}} \left( \| \widecheck{\A} - \widebar{\A}_1 \|_q +  \| \widecheck{\A} - \widecheck{\A}_1 \|_q  \right) \geq \frac{1}{2} \|\widebar{\A}_1 - \widebar{\A}_2\|_q = 2^{1/q-1}\xi.
	\end{split}
\end{equation*}
\end{proof} 
Combining Theorems \ref{th: low rank matrix reconstruction} and \ref{th: matrix reconstruction lower bound}, we conclude that the truncated SVD $\widehat\A$ achieves the optimal rate of low-rank matrix estimation error among all possible procedures $\widecheck{\A}$ in the class of $\mathcal{F}_r(\xi)$.

\section{Subspace Perturbation Bounds}\label{sec:wedin}
In this section, we apply the perturbation projection error bound established in Theorem \ref{th:SVD-projection} to derive a user-friendly subspace (singular vectors) perturbation bound. 
\begin{Theorem}\label{th: variant 2 of Wedin} Consider the same perturbation setting as in Theorem \ref{th: low rank matrix reconstruction}. For any $q \geq 1$, we have 
		\begin{equation*}
		\max\left\{\|\sin \Theta(\widehat{\U}, \U)\|_q, \|\sin \Theta(\widehat{\V}, \V)\|_q\right\} \leq \frac{2\| \Z_{\max(r)} \|_q }{\sigma_r(\A)}.
	\end{equation*}
\end{Theorem}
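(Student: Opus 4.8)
The plan is to derive the subspace bound as a direct consequence of the perturbation projection error bound in Theorem~\ref{th:SVD-projection}, by lower-bounding $\|P_{\widehat{\U}_\perp}\A\|_q$ in terms of $\|\sin\Theta(\widehat{\U},\U)\|_q$. I will treat the left singular subspace; the estimate for $\widehat{\V}$ then follows by applying the identical argument to $\A^\top$ together with the companion bound $\|\A P_{\widehat{\V}_\perp}\|_q \le 2\|\Z_{\max(r)}\|_q$ supplied by Theorem~\ref{th:SVD-projection}.

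First I would record the SVD $\A = \U\bSigma_1\V^\top$ with $\U\in\mathbb{O}_{m,r}$, $\V\in\mathbb{O}_{n,r}$, and $\bSigma_1\in\bbR^{r\times r}$ diagonal whose smallest diagonal entry is $\sigma_r(\A)>0$ (nonzero since $\rank(\A)=r$). Using the identity $\|\sin\Theta(\widehat{\U},\U)\|_q = \|\widehat{\U}_\perp^\top\U\|_q = \|P_{\widehat{\U}_\perp}\U\|_q$ (Lemma~2.1 of \cite{li1998relative2}, combined with unitary invariance and $\widehat{\U}_\perp^\top\widehat{\U}_\perp = \I$), the goal reduces to relating $\|P_{\widehat{\U}_\perp}\U\|_q$ to the projection quantity $\|P_{\widehat{\U}_\perp}\A\|_q$ already controlled by Theorem~\ref{th:SVD-projection}.

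The key step is the chain
$$\|P_{\widehat{\U}_\perp}\A\|_q = \|P_{\widehat{\U}_\perp}\U\bSigma_1\V^\top\|_q = \|P_{\widehat{\U}_\perp}\U\bSigma_1\|_q \ge \sigma_r(\A)\,\|P_{\widehat{\U}_\perp}\U\|_q.$$
The middle equality holds because right-multiplication by $\V^\top$, whose rows are orthonormal, leaves every singular value unchanged; the final inequality is the termwise singular-value estimate $\sigma_i(\M\bSigma_1)\ge \sigma_{\min}(\bSigma_1)\,\sigma_i(\M) = \sigma_r(\A)\,\sigma_i(\M)$ for $\M = P_{\widehat{\U}_\perp}\U$, raised to the $q$-th power and summed. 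This estimate follows from $\sigma_i(\M) = \sigma_i(\M\bSigma_1\bSigma_1^{-1}) \le \sigma_i(\M\bSigma_1)\,\|\bSigma_1^{-1}\|$, where I would invoke the standard singular-value product inequality of Lemma~\ref{lm: singular value characterization} and the invertibility of $\bSigma_1$. Combining the display with Theorem~\ref{th:SVD-projection} gives $\sigma_r(\A)\,\|\sin\Theta(\widehat{\U},\U)\|_q \le \|P_{\widehat{\U}_\perp}\A\|_q \le 2\|\Z_{\max(r)}\|_q$, and dividing by $\sigma_r(\A)$ yields the claim for $\widehat{\U}$; the $\widehat{\V}$ bound is obtained identically after transposing $\A$.

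There is essentially no deep obstacle here---the argument is a clean reduction once the correct projection quantity is isolated and Theorem~\ref{th:SVD-projection} is in hand. The only point demanding care is the termwise singular-value inequality, which is precisely where the invertibility of $\bSigma_1$ (equivalently $\sigma_r(\A)>0$) is used; this is the step I would verify most carefully, since it is responsible for producing the constant $2$ and the denominator $\sigma_r(\A)$ exactly as stated.
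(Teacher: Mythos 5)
Your proposal is correct and follows essentially the same route as the paper: both reduce the claim to the perturbation projection bound of Theorem~\ref{th:SVD-projection} and then lower-bound $\|P_{\widehat\U_\perp}\A\|_q$ by $\sigma_r(\A)\,\|\sin\Theta(\widehat\U,\U)\|_q$ via the singular-value product inequality of Lemma~\ref{lm: singular value characterization} (the paper factors $\A = \U(\U^\top\A)$ and applies the Schatten-norm form of that lemma directly, while you factor through the SVD $\U\bSigma_1\V^\top$ and rederive the termwise estimate using $\bSigma_1^{-1}$ --- a cosmetic difference only). The symmetry argument for $\widehat\V$ matches as well.
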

\begin{proof}
By Theorem \ref{th:SVD-projection}, we have 
\begin{equation*}
	\| P_{\widehat{\U}_\perp} \A \|_q \leq 2 \| \Z_{\max(r)} \|_q.
\end{equation*} 
Since the left singular subspace of $\A$ is $\U$, we have $\U\U^\top\A = P_{\U} \A = \A$. Then 
\begin{equation*}
	\|\sin \Theta(\widehat{\U}, \U)\|_q = \| \widehat{\U}_\perp^\top \U \|_q \overset{(a) }\leq \frac{\| \widehat{\U}_\perp^\top\U\U^\top\A \|_q}{\sigma_r(\U^\top\A)} = \frac{\|P_{\widehat\U_\perp}\A\|_q}{\sigma_r(\A)} \leq \frac{2 \| \Z_{\max(r)} \|_q}{\sigma_r(\A)},
\end{equation*} here (a) is by \cite[Theorem II.3.9]{stewart1990matrix}.
\end{proof}
We note that several similar bounds are developed towards the applications in statistics and machine learning in the past few years, for example, \cite[Corollary 4.1]{vu2013minimax}, \cite[Theorem 2]{yu2014useful}, and \cite[Lemma 5.1]{lei2015consistency}. When the matrix is positive semidefinite, these results yield
\begin{equation}\label{ineq:vu}
\begin{split}
& \left\|\sin\Theta(\widehat\U, \U)\right\|_F \leq \frac{\sqrt{2}\|\Z\|_F}{\sigma_r(\A)}, \quad \text{(\cite[Corollary 4.1]{vu2013minimax})},
\end{split}
\end{equation}
\begin{equation}\label{ineq:yu-lei}
\begin{split}
& \left\|\sin\Theta(\widehat\U, \U)\right\|_F \leq \frac{2\min\{r^{1/2}\|\Z\|, \|\Z\|_F\}}{\sigma_r(\A)} \quad \text{\cite[Theorem 2]{yu2014useful}, \cite[Lemma 5.1]{lei2015consistency}}.
\end{split}
\end{equation}
When $\A, \Z, \B$ are asymmetric, \cite{yu2014useful} also proved
\begin{equation}\label{ineq:yu}
    \begin{split}
        \left\|\sin\Theta(\widehat\U, \U)\right\|_F \leq \frac{2(2\|\A\| + \|\Z\|)\min\{r^{1/2}\|\Z\|, \|\Z\|_F\}}{\sigma_r^2(\A)} \quad \text{\cite[Theorem 3]{yu2014useful}}.
    \end{split}
\end{equation}
The perturbation bounds \eqref{ineq:vu},\eqref{ineq:yu-lei},\eqref{ineq:yu}, along with Theorem \ref{th: variant 2 of Wedin} in this paper, are ``user friendly" as they do not involve $\widehat\U$, $\widehat\V$ or $\B$ in contrast to the classical Wedin's $\sin\Theta$ bound \eqref{ineq:wedin-perturbation}. This advantage facilitates the application of these perturbations to many settings when $\A$ and $\Z$ are the given arguments: one no longer needs to further bound $\|\Z\widehat{\V}\|_q$, $\|\widehat\U^\top\Z\|_q$. The ``user friendly" advantage is also important in many settings as the denominator of \eqref{ineq:wedin-perturbation}, $\sigma_r(\B)$, depends highly on the perturbation $\Z$ and can be rather small due to perturbation \cite{yu2014useful}. In addition, our new result in Theorem \ref{th: variant 2 of Wedin} has a better dependence on both $\Z$ and $\sigma_r(\A)$ than \eqref{ineq:vu},\eqref{ineq:yu-lei},\eqref{ineq:yu} because 
$$\|\Z_{\max(r)}\|_F \leq \min\left\{r^{1/2} \|\Z\|, \|\Z\|_F\right\},$$ 
while the opposite side of this inequality does not hold. Moreover, Theorem \ref{th: variant 2 of Wedin} covers the more general asymmetric matrices in Schatten-$q$ sin$\Theta$ norms for any $q \in [1, \infty]$.

\section{Simulations} \label{sec: numerical study}

In this section, we provide numerical studies to support our theoretical results. We specifically compare the low-rank matrix estimation error bound (Theorem \ref{th: low rank matrix reconstruction}) and the matrix perturbation projection error bound (Theorem \ref{th:SVD-projection}) in Section \ref{sec:main-result} with the results in previous literature. In each setting, we randomly generate a perturbation $\Z  = \u \v^\top + \widetilde{\Z}$, draw $\A$ by a to-be-specified scheme, and construct $\B = \A + \Z$. Here $\u, \v$ are randomly generated unit vectors and $\widetilde{\Z}$ has i.i.d. $N(0, \sigma^2)$ entries. Throughout the simulation studies, we consider the Schatten-$2$ norm (i.e., Frobenius norm) as the error metric. Each simulation setting is repeated for 100 times and the average values are reported.

\subsection{Numerical Comparison of Low-Rank Matrix Estimation Error Bounds} \label{sec: numerical matrix reconstruction}
We first compare the low-rank matrix estimation error bound $\|\widehat\A - \A\|_q$ in Theorem \ref{th: low rank matrix reconstruction} and the bounds in \eqref{ineq: matrix reconstruction via q-norm} and \eqref{ineq: matrix reconstruction via spectral norm}. We set $n \in \{100,300\}, r \in \{4,6,\ldots, 16\}$, $\sigma = 0.02$, and generate $\A = \U \bSigma_1 \V^\top$, where $\U \in \bbR^{n \times r}, \V \in \bbR^{n \times r}$ are independently drawn from $\mathbb{O}_{n,r}$ uniformly at random; $\bSigma_1$ is a diagonal matrix with singular values decaying polynomially as: $(\bSigma_1)_{[i,i]} = \frac{10}{i}$, $1 \leq i \leq r$.

 The evaluations of the upper bounds in Theorem \ref{th: low rank matrix reconstruction}, \eqref{ineq: matrix reconstruction via q-norm}, \eqref{ineq: matrix reconstruction via spectral norm}, and the true value of $\|\widehat{\A} - \A\|_F$ are given in Figure \ref{fig: simulation2}. It shows that the upper bound in Theorem \ref{th: low rank matrix reconstruction} is tighter than the upper bounds in \eqref{ineq: matrix reconstruction via q-norm}, \eqref{ineq: matrix reconstruction via spectral norm} in all settings. In addition, when $n$ increases from $100$ to $300$, the upper bound of \eqref{ineq: matrix reconstruction via q-norm} significantly increases while the upper bound of Theorem \ref{th: low rank matrix reconstruction} remains steady. This is because the upper bounds of \eqref{ineq: matrix reconstruction via q-norm} and Theorem \ref{th: low rank matrix reconstruction} rely on $\|\Z\|_F$ and $\|\Z_{\max(r)}\|_F$, respectively.
\begin{figure}
	\centering
	\subfigure[$n = 100$]{\includegraphics[height = 0.3\textwidth]{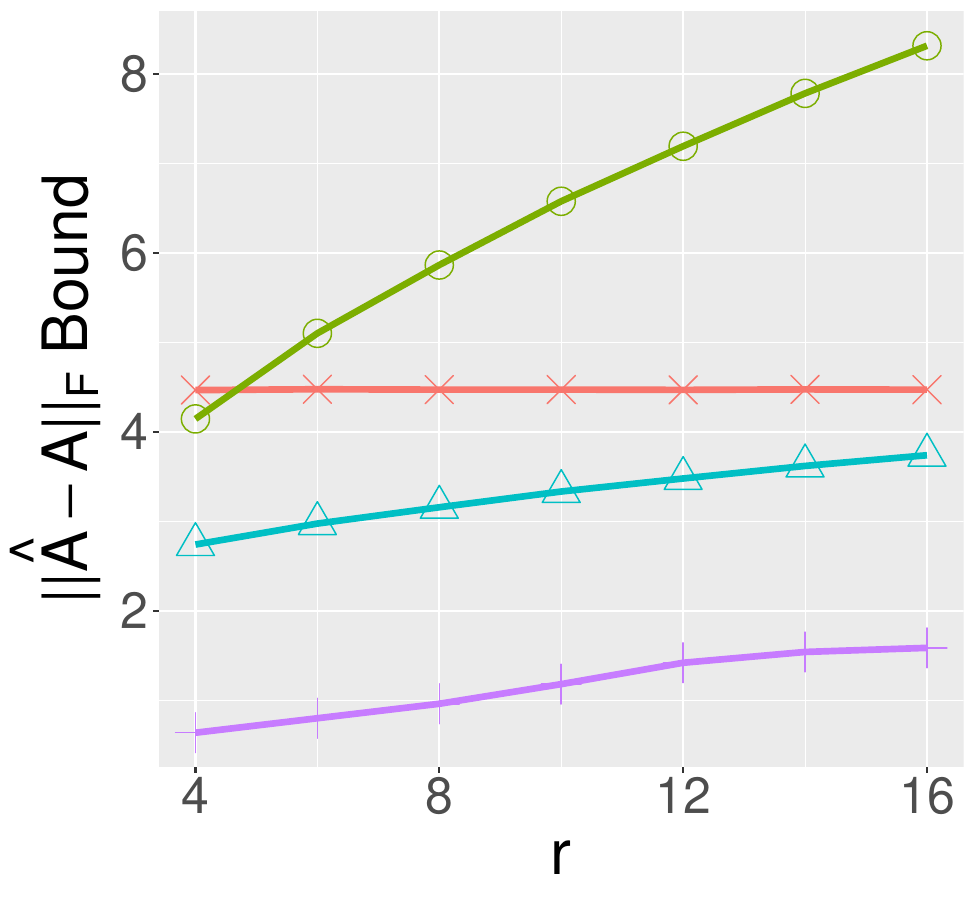}}
	\hskip1cm
	\subfigure[$n = 300$]{\includegraphics[height = 0.3\textwidth]{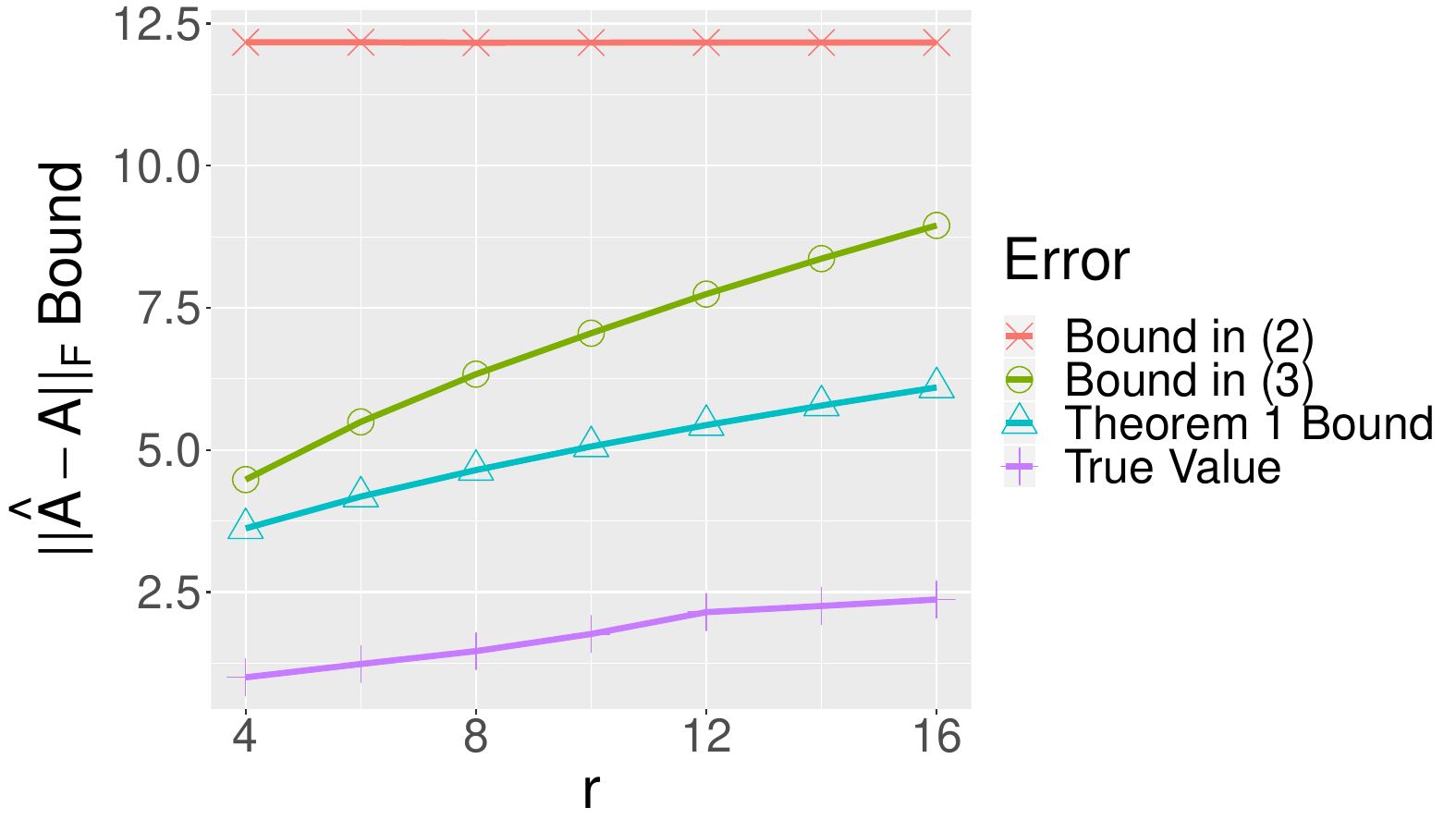}}
	\caption{Low-rank matrix estimation error bound (Theorem \ref{th: low rank matrix reconstruction}), upper bounds \eqref{ineq: matrix reconstruction via q-norm}, \eqref{ineq: matrix reconstruction via spectral norm} and the true value of $\|\widehat{\A} - \A\|_F$
	} \label{fig: simulation2}
\end{figure}

\subsection{Numerical Comparison of Matrix Perturbation Projection Error Bounds} \label{sec: numerical matrix perturbation projection}

Next, we compare the matrix perturbation projection error bound in Theorem \ref{th:SVD-projection} with the upper bound \eqref{ineq: matrix projection bound via sin Theta} derived from Wedin's sin$\Theta$ Theorem. We generate $\B, \Z$ in the same way as the previous simulation setting. When generating $\bSigma_1$ in $\A$, apart from the polynomial singular value decaying pattern considered in the last setting, we also consider the following exponential singular value decaying pattern: $(\bSigma_1)_{[i,i]} = 2^{5-i}$, $1 \leq i \leq r$. 

 The values of the upper bounds in Theorem \ref{th:SVD-projection} and \eqref{ineq: matrix projection bound via sin Theta}, along with the true value of $\|P_{\widehat{\U}_\perp}\A\|_q$, are presented in Figure \ref{fig: simulation1}. We find the bound of Theorem \ref{th:SVD-projection} is much tighter than the bound in \eqref{ineq: matrix projection bound via sin Theta}. As $r$ increases or singular value decaying pattern becomes exponential, i.e., $\A$ becomes ill-conditioned, \eqref{ineq: matrix projection bound via sin Theta} becomes loose while Theorem \ref{th:SVD-projection} can still be sharp.
\begin{figure}
	\centering
	\subfigure[Singular values of $\A$ decay polynomially]{\includegraphics[height = 0.3\textwidth]{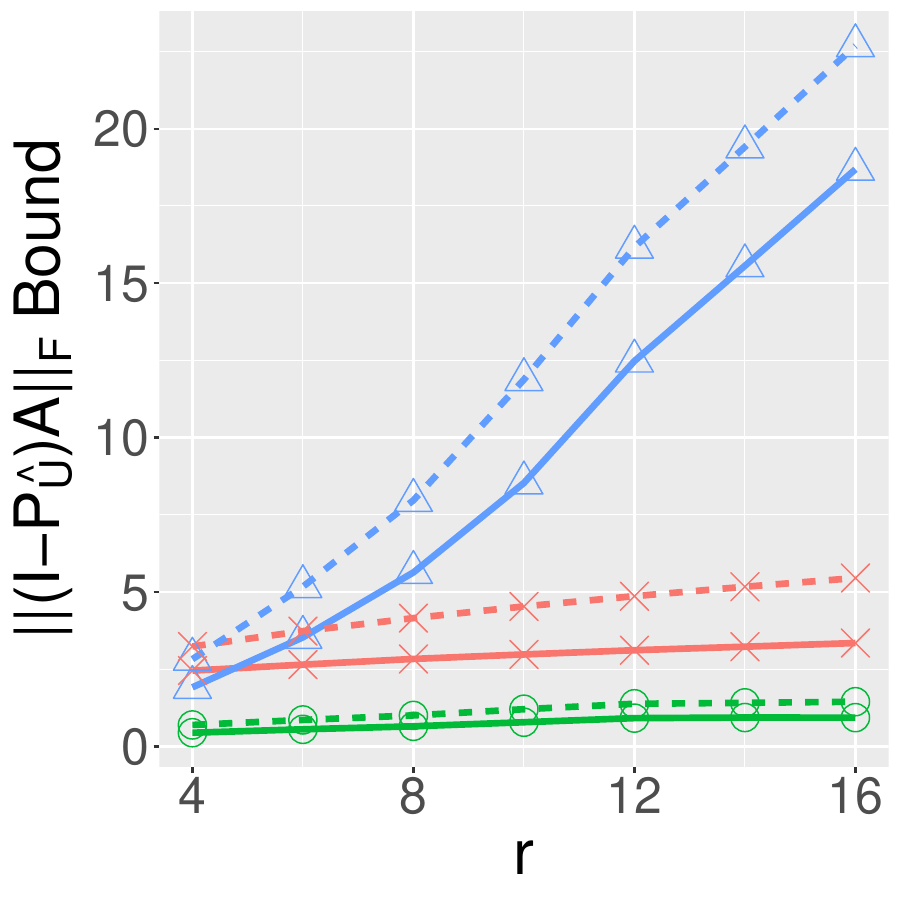}}
	\hskip1cm
	\subfigure[Singular values of $\A$ decay exponentially]{\includegraphics[height = 0.3\textwidth]{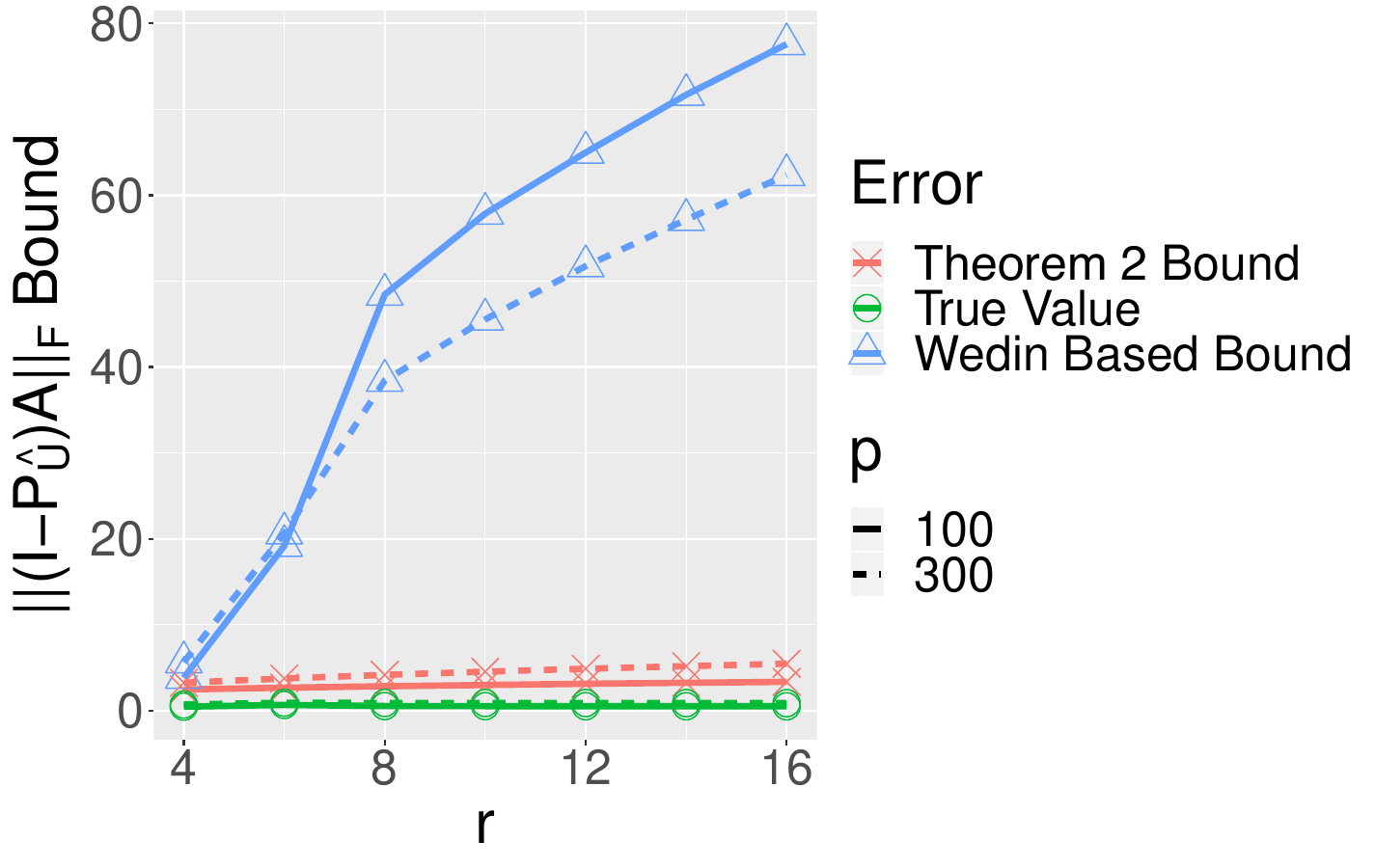}}
	\caption{Matrix perturbation projection error upper bound (Theorem \ref{th:SVD-projection}), upper bound via Wedin's sin$\Theta$ Theorem \eqref{ineq: matrix projection bound via sin Theta}, and the true value of $\|P_{\widehat\U_\perp}\A\|_F$. 
	} \label{fig: simulation1}
\end{figure}

\section{Discussions} \label{sec: conclusion}

In this paper, we prove a sharp upper bound for estimation error of rank-$r$ truncated SVD ($\|\widehat\A- \A\|_q$) under perturbation, and show its optimality in low-rank matrix estimation. The key technical tool we use is a novel matrix perturbation projection error bound for $\|P_{\widehat{\U}_\perp} \A\|_q$. As a byproduct, we also provide a sharper user-friendly sin$\Theta$ perturbation bound. The numerical studies demonstrate the advantages of these new results over the ones in the literature. 

The main result of this paper is the upper bound in \eqref{ineq:hat-A-A},  which is sharper than ones directly derived from the literature \eqref{ineq: wedin matrix reconstruction}, \eqref{ineq: matrix reconstruction via q-norm}, \eqref{ineq: matrix reconstruction via spectral norm}. We also comment that \eqref{ineq: wedin matrix reconstruction} can be conveniently extended to the general case that $\A$ is approximately rank $r$ \cite[Eq.(4.4)]{wedin1972perturbation}. It is interesting future work to study if a similar bound to \eqref{ineq:hat-A-A} can be obtained for the general approximately low-rank $\A$.

Throughout the paper, we study the additive perturbations and it is a future work to extend the results to multiplicative perturbations \cite{li1998relative,li1998relative2}. Also for convenience of presentation, we focus on the real number field in this paper. It is interesting to extend the developed results to the field of complex numbers. The main technical work for such an extension includes complex versions of Lemma \ref{lem: triangle of trun schatten q} and \ref{lm: charac of Schatten-q norm}.

Apart from the widely studied perturbation theory on singular value decomposition, the perturbation theory for other problems, such as pseudo-inverses \cite{wedin1973perturbation,stewart1977perturbation}, least squares problems \cite{stewart1977perturbation}, orthogonal projection \cite{stewart1977perturbation,xu2020perturbation,fierro1996perturbation,chen2016perturbation}, rank-one perturbation \cite{zhu2019rank}, are also important topics. It would be interesting to explore whether the tools developed in this paper is useful in studying the perturbation theory for these problems.

\bibliographystyle{alpha}
\bibliography{reference}

\end{sloppypar}

\end{document}